\newcommand{\N}{\ensuremath{\mathbb{N}}}
\newcommand{\R}{\ensuremath{\mathbb{R}}}
\newcommand{\C}{\ensuremath{\mathbb{C}}}
\newcommand{\ii}{\textnormal{i}}
\newcommand{\e}{\textnormal{e}}
\newcommand{\eip}[1]{\textnormal{e}^{2\pi\ii{#1}}}
\newcommand{\zb}[1]{\ensuremath{\boldsymbol{#1}}}
\newcommand{\pmat}[1]{\begin{pmatrix} #1 \end{pmatrix}}
\newcommand{\IpolI}{\mathcal{I}}
\DeclareMathOperator*{\diam}{diam}
\DeclareMathOperator*{\dist}{dist}
 \newtheorem{example}[theorem]{Example}
\numberwithin{equation}{section}
\numberwithin{table}{section}
\numberwithin{figure}{section}
\long\def\symbolfootnote[#1]#2{\begingroup%
\def\thefootnote{\fnsymbol{footnote}}\footnote[#1]{#2}\endgroup}
\newcommand{\dd}{\mathrm{d}}
\renewcommand{\mathbf}[1]{\ensuremath{\boldsymbol{#1}}}
\renewcommand{\textbf}[1]{{\ensuremath{\boldsymbol{#1}}}}
\renewcommand{\thefootnote}{\fnsymbol{footnote}}
\title{Fast evaluation of real and complex exponential sums}
\date{\today}
\date{}
\author{Stefan Kunis\footnotemark[2] \and Ines Melzer\footnotemark[2]}
\begin{document}

\maketitle

\begin{abstract}
Recently, the butterfly approximation scheme and hierarchical approximations have
been proposed for the efficient computation of integral transforms with oscillatory
and with asymptotically smooth kernels.
Combining both approaches, we propose a certain fast Fourier-Laplace transform, which in
particular allows for the fast evaluation of polynomials at nodes in the complex unit disk.
All theoretical results are illustrated by numerical experiments.
\end{abstract}

\begin{keywords}
trigonometric approximation, nonharmonic Fourier series, fast Fourier transform,
integral transforms, hierarchical matrices.
\end{keywords}

\begin{AMS}
65T50, 
42A15, 
30E10,  
65D05, 
65F30 
\end{AMS}

\footnotetext[2]{
  University Osnabr\"uck, Institute of Mathematics
  \{stefan.kunis,ines.melzer\}@math.uos.de
}

\section{Introduction}
The fast Fourier transform (FFT) \cite{CoTu65,DuVe90,FFTW05} belongs to the
algorithms with large impact on science and engineering.
Generalizations have been given for nonequispaced nodes, see \cite{MiBo96,AyChSoCu03,Yi09,CaDeYi09,KuMe12,PoDeMaYi14}
for the recently suggested butterfly schemes and \cite{DuRo95,Bey95,St98,ElSt98,KeKuPo09} together with its references for some wider survey on
gridding type approximations.
Moreover, structured low rank approximations for integral transforms with smooth kernels have been
developed as fast multipole methods \cite{GrRo87,SuPi01,YiBiZo04} and hierarchical matrices \cite{Ha99,Be00,GrHa03,Be08,Ha09}.
One particular instance of a structured low rank approximation for a smooth kernel is given in \cite{Ro88} for a discrete Laplace
transform.
In all cases, the concept in such schemes is to trade exactness for efficiency; instead of precise computations
up to machine precision, the proposed methods guarantee a given target accuracy.
Neglecting logarithmic factors in the problem size and the target accuracy, the computational complexity of all these algorithms scales linear in the
problems size.

Discrete Laplace transforms have been developed in \cite{Ro88,St92} based on polynomial interpolation and on approximations with Laguerre polynomials, respectively.
In the first part of this paper, we present a matrix form of \cite{Ro88} and develop a generalization to more general kernel functions and a small improvement in the error estimate.
The main contribution of the paper is a combination of the discrete Laplace transform and a generalized fast Fourier transform (FFT), where we use the decomposition of the Laplace
transform explicitly and a small number of generalized FFTs as black box.
In particular, this allows for the fast evaluation of a polynomial, given by its monomial coefficients, at many nodes in the complex unit disk.
Alternatively, we might interpret this as an FFT with nonequispaced nodes in the upper half plane.
For notational convenience, all ideas are presented for one space dimension but can be generalized in a straightforward manner to the multivariate case.
Finally, the theoretical results on accuracy and computational complexity are illustrated by some numerical experiments.

\section{Preliminaries}\label{sect:pre}
Let $q\in\N$ and the nodes $t_j=\cos\frac{2j+1}{2q}\pi$, $j=0,\hdots,q-1$, be the zeros of the $q$-th Chebyshev polynomial of the first kind.
Moreover, let $A:=[a,b]$, $a<b$, be an interval with diameter $\diam A:=b-a$, midpoint $c^A:=\frac{a+b}{2}$, and
Chebyshev nodes $y_j^A:=c^A + \frac{\diam A}{2} t_j$, $j=0,\dots,q-1$.
The corresponding Lagrange polynomials $L_r^{A}: A\rightarrow\R$, $r=0,\hdots,q-1$, are
\begin{equation*}
 L_r^{A}(y):= \prod_{\substack{j=0 \\ j \neq r}} ^{q-1} \frac{y-y_j^A}{y_r^A-y_j^A}
 =\frac{\frac{\lambda_r}{y -y_r^A}}{\sum_{s=0}^{q-1} \frac{\lambda_s}{y-y_s^A}},\quad
 \lambda_r:=(-1)^r \sin \left(\frac{2r+1}{2q} \pi\right),
\end{equation*}
where the second identity is called barycentric formula and allows for a stable evaluation, cf.~\cite{BeTr04}.
We define the interpolation operator $\mathcal{I}_q^{A}:C(A) \rightarrow C(A)$,
\begin{equation*}
 \mathcal{I}_q^A g := \sum_{j=0}^{q-1} g(y_j^A) L^A_j,
\end{equation*}
which fulfills
\begin{align}
  \label{eq:Iq}
  \|g-\mathcal{I}_q^A g\|_{C(A)} &\le \frac{\diam(A)^q}{2^{2q-1} q!} \|g^{(q)}\|_{C(A)},\quad \text{if}\; g\in C^{(q)}(A),\\
  \label{eq:Iq2}
  \|\mathcal{I}_q^A\|&:=\sup_{\|g\|_{C(A)}=1} \|\mathcal{I}_q^A g\|_{C(A)}\le 1+\frac{2}{\pi}\log q.
\end{align}

Now let $A,B\subset\R$ be two intervals and $\kappa: A \times B \rightarrow \R$, then we define the interpolation in both variables by
\begin{equation*}
 \mathcal{I}_q^{A\times B}:= \mathcal{I}_q^{A} \otimes \mathcal{I}_q^{B},\quad
 \mathcal{I}_q^{A\times B} \kappa (y,\xi)  : = \sum_{s=0}^{q-1} \sum_{r=0}^{q-1} L_s^A(y) \kappa(y_s^A,\xi_r^B) L_r^B(\xi),
\end{equation*}
where $\xi_r^B$ denote the Chebyshev nodes in the interval $B$.
We note in passing that $\mathcal{I}_q^{A} \otimes \mathcal{I}_q^{B}=
(\mathcal{I}_q^A \otimes \mathcal{I})(\mathcal{I}\otimes \mathcal{I}_q^B)$, where $\mathcal{I}$ denotes the identity, and
$\|\mathcal{I}_q^A \otimes \mathcal{I}\|\le 1+\frac{2}{\pi}\log q$.

\section{Rokhlin's discrete Laplace transform and a generalization}\label{sect:lt}
We generalize and slightly improve \cite{Ro88} to a method computing
\begin{equation}\label{eq:sum}
 f(y_i)=\sum_{j=1}^{N} \hat f_j \kappa(y_i,\xi_j),\quad i=1,\hdots,N,
\end{equation}
for specific kernels $\kappa$, given $N\in\N$, $y_1>\hdots>y_N>0$, $\xi_1>\hdots>\xi_N>0$, and $\hat f_k\in\C$.

Adopting the terms from the hierarchical matrices literature, see e.g.~\cite{Be08,Ha09}, 
a function \mbox{$\kappa:(0,\infty)\times (0,\infty)\rightarrow\R$} is said to be \emph{asymptotically smooth} if
there exist constants $C,\mu,s\ge 0$, $\nu\in\R$ such that for all $q\in\N$ the conditions
\begin{equation*}
  \left|y^q \partial^q_y \kappa(y,\xi)\right| \le C q! \mu^q q^{\nu} (y\xi)^{-s} 
 \quad \text{and}\quad
 \left|\xi^q \partial^q_\xi \kappa(y,\xi)\right| \le C q! \mu^q q^{\nu} (y\xi)^{-s}
\end{equation*}
are fulfilled for all $y,\xi \in (0,\infty)$. The parameter $s$ characterizes the singularity of the kernel for $y\xi=0$.
Moreover, we call two intervals $A,B\subset [0,\infty)$ \emph{admissible} if
\begin{equation*}
 \diam(A)\le\dist(A,0)
 \quad \text{and}\quad
 \diam(B)\le\dist(B,0).
\end{equation*}

\begin{theorem}\label{thm:LapLocalError}
 Let $q\in\N$, $q\ge 2$, $A,B\subset(0,\infty)$ be admissible, and $\kappa\colon A\times 
B\rightarrow\R$ be 
asymptotically smooth with constants $C,\mu, s\ge 0$ and $\nu\in \R$, then we have
 \begin{equation*}
  \left\|\kappa - \IpolI_q^{A\times B}\kappa\right\|_{C(A\times B)} \le \frac{C \mu^q 
	  q^{\nu}}{2^{2q-1}} \left(2+\frac{2}{\pi}\log q\right) \left(\dist(A,0)\dist(B,0) \right)^{-s}.	
 \end{equation*}
\end{theorem}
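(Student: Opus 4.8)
The plan is to split the tensor-product interpolation error by the standard telescoping identity and estimate each univariate piece using the one-dimensional bound \eqref{eq:Iq}, controlling the remaining operator by \eqref{eq:Iq2}. Concretely, write
\[
 \kappa - \IpolI_q^{A\times B}\kappa
 = \bigl(\kappa - \IpolI_q^{A}\otimes\IpolI\,\kappa\bigr)
   + \bigl(\IpolI_q^{A}\otimes\IpolI\bigr)\bigl(\kappa - \IpolI\otimes\IpolI_q^{B}\,\kappa\bigr),
\]
which is legitimate by the factorization $\IpolI_q^{A}\otimes\IpolI_q^{B}=(\IpolI_q^A\otimes\IpolI)(\IpolI\otimes\IpolI_q^B)$ noted in the preliminaries. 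Taking $C(A\times B)$-norms, the first summand is bounded by $\sup_{\xi\in B}\|\kappa(\cdot,\xi)-\IpolI_q^A\kappa(\cdot,\xi)\|_{C(A)}$ and the second by $\|\IpolI_q^A\otimes\IpolI\|\cdot\sup_{y\in A}\|\kappa(y,\cdot)-\IpolI_q^B\kappa(y,\cdot)\|_{C(B)}$, where $\|\IpolI_q^A\otimes\IpolI\|\le 1+\tfrac{2}{\pi}\log q$.

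Next I would estimate the two univariate errors. Fixing $\xi\in B$ and applying \eqref{eq:Iq} to $g=\kappa(\cdot,\xi)\in C^{(q)}(A)$ gives
\[
 \|\kappa(\cdot,\xi)-\IpolI_q^A\kappa(\cdot,\xi)\|_{C(A)}
 \le \frac{\diam(A)^q}{2^{2q-1}q!}\,\|\partial_y^q\kappa(\cdot,\xi)\|_{C(A)}.
\]
Here the asymptotic smoothness enters: for $y\in A$ we have $|\partial_y^q\kappa(y,\xi)|\le C q!\,\mu^q q^{\nu}(y\xi)^{-s}y^{-q}\le C q!\,\mu^q q^{\nu}\dist(A,0)^{-q}(\dist(A,0)\dist(B,0))^{-s}$, using $y\ge\dist(A,0)$, $\xi\ge\dist(B,0)$, $s\ge0$. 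Combining with admissibility $\diam(A)\le\dist(A,0)$, the factor $\diam(A)^q\dist(A,0)^{-q}\le 1$, so this univariate error is at most $\frac{C\mu^q q^{\nu}}{2^{2q-1}}(\dist(A,0)\dist(B,0))^{-s}$. The symmetric argument in the $\xi$-variable yields the same bound for the second univariate error, again using $\diam(B)\le\dist(B,0)$ and $y\ge\dist(A,0)$.

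Finally I would assemble the pieces: the total error is at most
\[
 \frac{C\mu^q q^{\nu}}{2^{2q-1}}\bigl(\dist(A,0)\dist(B,0)\bigr)^{-s}
 + \Bigl(1+\tfrac{2}{\pi}\log q\Bigr)\frac{C\mu^q q^{\nu}}{2^{2q-1}}\bigl(\dist(A,0)\dist(B,0)\bigr)^{-s}
 = \frac{C\mu^q q^{\nu}}{2^{2q-1}}\Bigl(2+\tfrac{2}{\pi}\log q\Bigr)\bigl(\dist(A,0)\dist(B,0)\bigr)^{-s},
\]
which is exactly the claimed estimate. I do not anticipate a genuine obstacle here; the only points requiring a little care are checking that $C^{(q)}$-regularity of $\kappa$ in each variable is implicit in the asymptotic-smoothness hypothesis (so that \eqref{eq:Iq} applies) and bookkeeping the two admissibility inequalities so that the $\diam/\dist$ ratios are absorbed cleanly — the splitting and the substitution of the derivative bounds are otherwise routine.
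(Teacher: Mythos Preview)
Your proposal is correct and follows essentially the same approach as the paper: the same telescoping identity $\kappa - \IpolI_q^{A\times B}\kappa = (\kappa - \IpolI_q^A\otimes\IpolI\,\kappa) + (\IpolI_q^A\otimes\IpolI)(\kappa - \IpolI\otimes\IpolI_q^B\,\kappa)$, the same univariate bound via \eqref{eq:Iq} combined with asymptotic smoothness and admissibility, and the same use of \eqref{eq:Iq2} for the operator norm. The only cosmetic difference is that the paper keeps the intermediate bound as $\sup_{y\in A,\,\xi\in B}|(y\xi)^{-s}|$ and converts it to $(\dist(A,0)\dist(B,0))^{-s}$ at the very end, whereas you do this conversion inside each univariate estimate.
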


\begin{proof}
 For fixed $\xi\in B$ and $g:A \rightarrow \R$, $g(y):=\kappa(y,\xi)$, we apply the error formula \eqref{eq:Iq} and obtain
 \begin{equation*}
	 \|g-\IpolI_q^A g\|_{C(A)} \le \frac{\diam(A)^q}{2^{2q-1} q!} \| g^{(q)}\|_{C(A)} .
 \end{equation*}
 The asymptotic smoothness and the admissibility implies
 \begin{equation*}
	 \left| g^{(q)}(y) \right| \le  C q! \mu^q q^\nu y^{-q} (y\xi)^{-s}\le C q!  \mu^q q^\nu (\dist(A,0))^{-q} 
(y\xi)^{-s}
\end{equation*}
and in conclusion
 \begin{equation*}
	 \|g-\IpolI_q^A g\|_{C(A)} \le C \mu^q q^\nu 2^{1-2q} \sup_{y\in A, \xi \in B}|(y\xi)^{-s}|.
 \end{equation*}
The same estimate holds true with respect to $\xi\in B$. From this and together with the bound \eqref{eq:Iq2}, we conclude
 \begin{equation*}
	 \begin{split}
  \left\|\kappa - \IpolI_q^{A\times B}\kappa\right\|_{C(A\times B)}
  &\le
  \left\|\kappa - \left(\IpolI_q^{A}\otimes\IpolI\right)\kappa\right\|_{C(A\times B)}
  +  \left\|\IpolI_q^{A}\otimes\IpolI\right\| 
  \left\|\kappa - \left(\IpolI\otimes\IpolI_q^{B}\right)\kappa\right\|_{C(A\times B)}\\
  &\le  
  \frac{C \mu^q q^{\nu}}{2^{2q-1}} \left(2+\frac{2}{\pi}\log q\right) \sup_{y\in A, \xi \in 
B}|(y\xi)^{-s}|. 
	\end{split}
 \end{equation*}
The conditions $y\xi \ge \dist(A,0)\dist(B,0)$ and $s\ge 0$ imply the assertion. 
\qquad \end{proof}

We start with the discrete Laplace transform from \cite{Ro88}, i.e., $\kappa(y,\xi)=\e^{-\xi y}$. In matrix form, the computation of \eqref{eq:sum} reads as
\begin{equation}\label{eq:MatK}
 \mathbf{f}:=\mathbf{K \hat f} \quad \text{with} \quad \mathbf{K} :=\left(\kappa(y_j,\xi_k) \right)_{j,k=1,\dots,N} \quad \text{and} \quad \mathbf{\hat f}:=\left(\hat f_j\right)_{j=1,\hdots,N}. 
\end{equation}
Algorithm \ref{alg:FLT} finally computes an approximation $\mathbf{\tilde f} \approx \mathbf{f}$, precisely defined in Equation \eqref{eq:tildef} and we refer the reader to Theorem \ref{thm:approxE} for a shortened and slightly improved error estimate \cite{Ro88}.
Subsequently, we focus on the matrix partitioning of $\zb K$ into blocks of approximate low rank and the derivation of the computational costs of Algorithm \ref{alg:FLT}.
This will allow for the generalization to other kernels in the end of this section and for an application when evaluating polynomials in the unit disk in Section \ref{sect:exp}.

\begin{definition}\label{def:M}
For given target accuracy $\varepsilon>0$ and interval lengths $y_1,\xi_1>0$, we define
\begin{align*}
	q 	  & :=\lceil\frac{1}{2}+\log_4 1/\varepsilon\rceil,
        &M        & := \left\lceil\log_2\frac{y_1\xi_1}{\varepsilon}\right\rceil+1,\\
        \ell_m    & := \max(1,\lfloor \log_2(y_1 \xi_1) - m - \log_2(\log 1/\varepsilon)\rfloor + 1),
        &L_m       & := M-m,
 \end{align*}  
 for $m=1,\hdots,M-1$ and set up the geometric partitioning, see Figure \ref{fig:decomposition} for an illustration,
 \begin{align*}
	Y         & := [0,y_1],\quad
	&Y_{M}    & := \left[0, \frac{y_1}{2^{M-1}}\right],\quad
	&Y_m      & := \left(\frac{y_1}{2^m},\frac{y_1}{2^{m-1}} \right],\\
	\Omega    & := [0,\xi_1], \quad
	&\Omega_M & := \left[0, \frac{\xi_1}{2^{M-1}} \right],\quad
	&\Omega_m & := \left(\frac{\xi_1}{2^m},\frac{\xi_1}{2^{m-1}} \right].
 \end{align*}
For ease of notation in Algorithm \ref{alg:FLT}, we moreover define
\begin{align*}
 \mathbf{L}^{\Omega_l} &:= \left(L_r^{\Omega_l} (\xi_j) \right)_{\xi_j \in \Omega_l,r=0,\dots,q-1},
 \quad
 &\mathbf{L}^{Y_m} &:= \left(L_s^{Y_m} (y_i) \right)_{y_i \in Y_m,s=0,\dots,q-1}, \\
 \mathbf{\hat f}^{\Omega_l} & := \left( \hat f_j\right)_{\xi_j \in \Omega_l},
 \quad
 &\mathbf{K}^{Y_m,\Omega_l} &:= \left( \kappa(y_s^{Y_m},\xi_r^{\Omega_l}) \right)_{s,r=0}^{q-1,q-1},
\end{align*}
for $m,l=1,\dots,M-1$.
\end{definition}

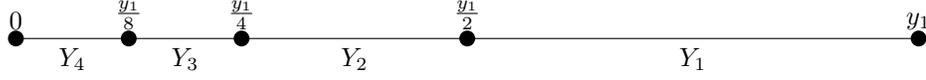
\begin{figure}[H]
	\centering
	\begin{tikzpicture}[scale=1.5]

		\coordinate[label=above:$0$] (00) at (0,0);
		\coordinate[label=above:$\frac{y_1}{8}$] (01) at (1,0);
		\coordinate[label=above:$\frac{y_1}{4}$] (02) at (2,0);
		\coordinate[label=above:$\frac{y_1}{2}$] (04) at (4,0);
		\coordinate[label=above:$y_1$] (08) at (8,0);
		
		\draw (00) -- (01) node [midway,below] {$Y_{4}$};
		\draw (01) -- (02) node [midway,below] {$Y_{3}$};
		\draw (02) -- (04) node [midway,below] {$Y_{2}$};
		\draw (04) -- (08) node [midway,below] {$Y_{1}$};
		\fill (00) circle (2pt);	
		\fill (01) circle (2pt);
		\fill (02) circle (2pt);	
		\fill (04) circle (2pt);
		\fill (08) circle (2pt);	
		
	\end{tikzpicture}
	\caption{ Decomposition of $Y=[0,y_1]$ for $M=4$.}
	\label{fig:decomposition}
\end{figure}

Regarding the computational costs,  we first note that the computation of $g_m$ in Algorithm~\ref{alg:FLT} by means of a cumulative summation takes $\mathcal{O}(N)$ operations.
The spatial partitions in Definition~\ref{def:M} yield a partition of the matrix $\mathbf{K}$ into admissible blocks.
Considering only the case when the kernel is approximated by interpolation, Algorithm \ref{alg:FLT} factors out row- and column bases by
 \begin{align*}
  \zb {\tilde K}
       &=\left(\zb L^{Y_m}\zb K^{Y_m,\Omega_{\ell}}\left(\zb L^{\Omega_{\ell}}\right)^{\top}\right)_{m=1,\ell=\ell_m}^{M-1,L_m}\\
       &=\diag(\zb L^{Y_1},\hdots,\zb L^{Y_{M-1}})
        \pmat{\zb K^{Y_m,\Omega_{\ell}}}_{m=1,\ell=\ell_m}^{M-1,L_m}
        \diag(\zb L^{\Omega_{\ell_m}},\hdots,\zb L^{\Omega_{L_m}})^{\top}
 \end{align*}
 Clearly, the application of the rightmost block diagonal matrix takes at most $\sum_{\ell=\ell_m}^{L_m} q|\Omega_\ell| \le q N$ operations.
 The second matrix has at most $L_m - \ell_m \le 2 \log(1/\varepsilon)$ blocks in its $m$-th block row, in total its application takes
 $\mathcal{O}(Mq^2\log\frac{1}{\varepsilon})$ operations and the multiplication with the left block diagonal matrix again takes at most $q N$ operations.
 Finally note that we neglect the precomputation of the necessary matrices $\mathbf{L}^{\Omega_\ell}$, $\mathbf{L}^{Y_m}$, and $\mathbf{K}^{Y_m,\Omega_\ell}$.
 In total, Algorithm \ref{alg:FLT} takes
 \begin{equation*}
   \mathcal{O}\left(N\log\frac{1}{\varepsilon}+\log^3\frac{1}{\varepsilon}\log\frac{y_1\xi_1}{\varepsilon}\right)
 \end{equation*}
 floating point operations.

\begin{algorithm}[ht]
  \caption{Laplace transform}
	\label{alg:FLT}
	\begin{algorithmic}
		\Require\ \\
		$\varepsilon \in (0,1)$  \Comment{target accuracy}\\
		$N \in \N$ 	\Comment{number of sampling nodes}\\
		$\xi_1 > \xi_2 \dots > \xi_N >0$ \Comment{nodes in frequency domain}\\
		$y_1 >y_2 > \dots > y_N >0$ \Comment{nodes in spatial domain} \\
		$\zb {\hat f}\in\C^N$ \Comment{Fourier coefficients}
		\Ensure\ \\
		$\mathbf{\tilde f}\in\C^N$, $\mathbf{\tilde f}\approx \zb K \zb {\hat f}$ \Comment{samples in spatial domain}
		\vspace{1ex}
		\hrule
		\vspace{1ex}
		\vspace{1ex}
		\State{$g_M=\sum_{\xi_j \in \Omega_M} \hat f_j$}
		\For{$m=M-1,\hdots,1$}
			\State{$g_{m}=g_{m+1} + \sum_{\xi_j \in \Omega_{m}} \hat f_j$}
		\EndFor
		\State $\zb {\tilde{f}}^{Y_M}= g_1$
		\For{$\ell=1,\dots, M-1$}
			\State $\mathbf{v}^{\Omega_\ell}= \left(\mathbf{L}^{\Omega_\ell}\right)^{\top} \mathbf{\hat f}^{\Omega_\ell}$
		\EndFor
			\vspace{1ex}
		\For{$m=1,\dots,M-1$}
			\State $\mathbf{h}^{Y_m} = \sum_{\ell=\ell_m}^{L_m} \mathbf{K}^{Y_m,\Omega_\ell} \mathbf{v}^{\Omega_\ell}$
			\State $\zb {\tilde{f}}^{Y_m}=\mathbf{L}^{Y_m} \mathbf{h}^{Y_m}+g_{L_m+1}\zb 1$
		\EndFor
		\end{algorithmic}

	\end{algorithm}

This discrete Laplace transform can be generalized to asymptotically smooth kernels. 
The slightly modified versions of Algorithm~\ref{alg:FLT} and Theorem~\ref{thm:approxE} read as follows. 

\begin{theorem}
 Let $\kappa:(0,\infty)\times(0,\infty) \to \R$ asymptotically smooth with constants $\nu\in\R$, $s\ge 0$ and $\mu\in (0,4)$.
 Furthermore, let $\varepsilon >0$, $y_1,\xi_1>0$ be fixed for all $N\in \N$, and the nodes $y_1>y_2>\hdots>y_N>0$, $\xi_1>\xi_2>\hdots>\xi_N>0$ be quasi-uniformly such that
 the intervals $[0,y_1/N]$ and $[0,\xi_1/N]$ contain only a constant number of nodes for all $N\in \N$, respectively. 
 We set $M:=\left\lceil \log_2 \frac{y_1 \xi_1 N}{\varepsilon} \right\rceil +1$ and $q \in \mathcal{O}(\frac{\log N}{\varepsilon})$ and define
 the approximation $\tilde f: Y \to \C$,
\begin{equation*}
	\tilde f(y): = \begin{cases}
		f(y) & y \in Y_M,\\
    \sum_{\ell=1}^{M-1} \sum_{\xi_j \in \Omega_\ell} \hat f_j \IpolI^{Y_m \times \Omega_\ell} \kappa(y,\xi_j)
    + \sum_{\xi_k \in \Omega_M} \hat f_k \kappa(y,\xi_k) & y\in Y_m,\; 1\le m<M.
       \end{cases}
\end{equation*}
Then the error estimate
$
  \|\mathbf{f} - \mathbf{\tilde f} \|_\infty \le \varepsilon \| \mathbf{\hat f}\|_1
$
is fulfilled and the modified Algorithm \ref{alg:FLT} computes this approximation in
$
	\mathcal{O}(N\log \frac{N}{\varepsilon}+\log^4\frac{N}{\varepsilon})
$
 floating point operations.
\end{theorem}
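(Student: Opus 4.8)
The plan is to split the error into a "near-field" contribution, handled exactly, and a sum of "far-field" block contributions, each controlled by Theorem~\ref{thm:LapLocalError}. For $y\in Y_M$ the approximation is exact by definition, so assume $y\in Y_m$ with $1\le m<M$. For such $y$, writing $f(y)=\sum_{\ell=1}^{M}\sum_{\xi_j\in\Omega_\ell}\hat f_j\kappa(y,\xi_j)$, we compare against $\tilde f(y)$: the terms with $\xi_k\in\Omega_M$ are reproduced exactly, so
\begin{equation*}
 |f(y)-\tilde f(y)| \le \sum_{\ell=1}^{M-1}\sum_{\xi_j\in\Omega_\ell} |\hat f_j|\,\bigl|\kappa(y,\xi_j)-\IpolI_q^{Y_m\times\Omega_\ell}\kappa(y,\xi_j)\bigr|
  \le \|\mathbf{\hat f}\|_1 \max_{\ell=1,\dots,M-1}\bigl\|\kappa-\IpolI_q^{Y_m\times\Omega_\ell}\kappa\bigr\|_{C(Y_m\times\Omega_\ell)}.
\end{equation*}
Thus it suffices to bound each local interpolation error by $\varepsilon$.

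Next I would verify that each pair $(Y_m,\Omega_\ell)$, $1\le m,\ell\le M-1$, is admissible in the sense of Section~\ref{sect:lt}: since $Y_m=(y_1/2^m,y_1/2^{m-1}]$ we have $\diam(Y_m)=y_1/2^m=\dist(Y_m,0)$, and likewise for $\Omega_\ell$, so admissibility holds with equality. Theorem~\ref{thm:LapLocalError} then gives
\begin{equation*}
 \bigl\|\kappa-\IpolI_q^{Y_m\times\Omega_\ell}\kappa\bigr\|_{C(Y_m\times\Omega_\ell)}
 \le \frac{C\mu^q q^\nu}{2^{2q-1}}\Bigl(2+\tfrac{2}{\pi}\log q\Bigr)\bigl(\dist(Y_m,0)\dist(\Omega_\ell,0)\bigr)^{-s}
 = \frac{C\mu^q q^\nu}{2^{2q-1}}\Bigl(2+\tfrac{2}{\pi}\log q\Bigr)\Bigl(\tfrac{y_1\xi_1}{2^{m+\ell-2}}\Bigr)^{-s}.
\end{equation*}
The factor $(y_1\xi_1 2^{2-m-\ell})^{-s}$ is largest when $m+\ell$ is largest, i.e. $m=\ell=M-1$, where it is at most $(y_1\xi_1 2^{4-2M})^{-s}$; by the choice $M=\lceil\log_2(y_1\xi_1N/\varepsilon)\rceil+1$ this is $\OO{(N/\varepsilon)^{s}}$ up to constants depending on $y_1,\xi_1,s$. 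Since $\mu\in(0,4)$, the prefactor $\mu^q/2^{2q-1}=2(\mu/4)^q$ decays geometrically in $q$, and the extra polynomial and logarithmic factors $q^\nu(2+\frac{2}{\pi}\log q)$ are absorbed; hence choosing $q$ proportional to $\log(N/\varepsilon)$ — with a constant depending on $\mu,\nu,s$ — forces the whole bound below $\varepsilon$. Combining with the displayed reduction yields $\|\mathbf{f}-\mathbf{\tilde f}\|_\infty\le\varepsilon\|\mathbf{\hat f}\|_1$.

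For the complexity, I would argue exactly as for Algorithm~\ref{alg:FLT}: the cumulative sums $g_m$ cost $\OO{N}$; applying the block-diagonal matrices $\diag(\mathbf{L}^{\Omega_\ell})$ and $\diag(\mathbf{L}^{Y_m})$ costs $\OO{qN}$ each, since every node lies in exactly one interval and each $\mathbf{L}$-block has $q$ columns; and the middle matrix $(\mathbf{K}^{Y_m,\Omega_\ell})$ has, in each of its $M-1$ block rows, at most $L_m-\ell_m+1\in\OO{\log(1/\varepsilon)}$ blocks of size $q\times q$, for a total of $\OO{Mq^2\log(1/\varepsilon)}$ operations. With $q\in\OO{\log(N/\varepsilon)}$ and $M\in\OO{\log(y_1\xi_1N/\varepsilon)}$ (and $y_1,\xi_1$ fixed), the middle term is $\OO{\log^4(N/\varepsilon)}$ and the total is $\OO{N\log\frac{N}{\varepsilon}+\log^4\frac{N}{\varepsilon}}$. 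The main obstacle is the bookkeeping in the error step: one must track how the worst-case factor $(y_1\xi_1 2^{2-m-\ell})^{-s}$ behaves — it is the near-diagonal blocks $m=\ell=M-1$ that are critical, not the ones closest to the origin — and then check that the definition of $M$ was chosen precisely so that this factor is only polynomially large in $N/\varepsilon$, so that a logarithmically large $q$ suffices; the quasi-uniformity hypothesis enters only to guarantee that $Y_M$ (and $\Omega_M$) contain $\OO{1}$ nodes, keeping the exact near-field evaluation cheap.
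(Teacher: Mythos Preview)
Your error analysis is correct and essentially identical to the paper's: reduce to a uniform bound on the block interpolation error, verify admissibility of $(Y_m,\Omega_\ell)$, apply Theorem~\ref{thm:LapLocalError}, observe that the worst singularity factor occurs at $m=\ell=M-1$, bound it via the definition of $M$, and absorb the polynomial/log factors in $q$ into the geometric factor $(\mu/4)^q$. Two harmless slips: the product $\dist(Y_m,0)\dist(\Omega_\ell,0)$ equals $y_1\xi_1/2^{m+\ell}$, not $y_1\xi_1/2^{m+\ell-2}$, and consequently the worst-case singularity factor is of order $(N/\varepsilon)^{2s}$ rather than $(N/\varepsilon)^{s}$; this only doubles the constant in the requirement $q\in\mathcal{O}(\log(N/\varepsilon))$. (Also, the blocks with $m=\ell=M-1$ \emph{are} the ones closest to the origin in the $(y,\xi)$-plane, contrary to your parenthetical remark.)

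Your complexity argument, however, analyzes the wrong algorithm. The cumulative sums $g_m$ and the restricted column range $\ell_m\le\ell\le L_m$ belong to the \emph{exponential} kernel, where $\kappa$ is replaced by $0$ or $1$ on the off-band blocks (Lemma~\ref{lem:approxE}). For a general asymptotically smooth kernel no such replacement is available, and the theorem's definition of $\tilde f$ explicitly lets $\ell$ run over all of $1,\dots,M-1$; the paper stresses exactly this point. Hence each block row of the middle matrix has $M-1\in\mathcal{O}(\log(N/\varepsilon))$ blocks, not $\mathcal{O}(\log(1/\varepsilon))$, and there are no $g_m$. The correct count is therefore $\mathcal{O}(M^2q^2)=\mathcal{O}(\log^4(N/\varepsilon))$ for the middle matrix, which happens to coincide with the stated bound, so your final complexity is right for the wrong reason. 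The quasi-uniformity is used as you say: it guarantees $|Y_M|,|\Omega_M|=\mathcal{O}(1)$, so the exact near-field parts (evaluating $f$ on $Y_M$, and the $\Omega_M$-column for all rows) cost $\mathcal{O}(N)$.
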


\begin{proof}
  For $\mu \in (0,4)$, $C<0$, and $\nu \in \R$, there exist constants $\tilde C>0$ and $\frac{\mu}{4}< c < 1$, such that
  \begin{equation*}
		2C \left( \frac{\mu}{4} \right)^q q^\mu \left(2+\frac{2}{\pi}\log q\right) \le \tilde C c^q
  \end{equation*}
  holds true for all $q \in \N$. Theorem~\ref{thm:LapLocalError} implies for $A,B \subset(0,\infty)$ a local approximation 
\begin{equation}\label{eq:locApprox}
	\| \kappa - \IpolI_q^{A \times B} \kappa \|_{C(A\times B)} \leq \tilde C c^q (\dist(A,0)\dist(B,0))^{-s}.
\end{equation}
Let now $A:=Y_m$ and $B:= \Omega_\ell$ with $m,\ell \neq M$ be given.
Since $M-1 \le \log_2 \frac{y_1 \xi_1 N}{\varepsilon}+\frac{1}{2}$, it follows
\begin{equation}\label{eq:distAB}
 \dist(A,0)\dist(B,0) \ge \frac{y_1}{2^{M-1}} \frac{\xi_1}{2^{M-1}} \ge  \frac{\varepsilon^2}{ 2y_1 
\xi_1 N^2}.
 \end{equation}
 Applying $q \ge \frac{1}{|\log c|} \log\left (\frac{ \tilde C \varepsilon^{2s+1}}{(2y_1\xi_1)^s N^{2s})} \right)$ implies
  \begin{align*}
  \left\|\kappa - \IpolI_q^{A\times B}\kappa\right\|_{C(A\times B)} 	& \le \tilde C (2y_1 \xi_1)^s \left(\frac{N}{\varepsilon}\right)^{2s} c^q \le \varepsilon.
 \end{align*}
 
 Finally, $M \in \mathcal{O}(\log\frac{N}{\varepsilon})$ leads to a constant number $(\mathcal{O}(\varepsilon))$ of nodes in the near fields $Y_M$ and $\Omega_M$ where we apply direct computations.
 Of course the approximations of the kernel by either zero or one as in Lemma \ref{lem:approxE} cannot be done in general and thus we
 let run $\ell=1,\hdots,M-1$ in Algorithm \ref{alg:FLT}. 
 Since $q \in \mathcal{O}(\log \frac{N}{\varepsilon})$, Algorithm \ref{alg:FLT} now takes $\mathcal{O}(N\log \frac{N}{\varepsilon}+\log^4\frac{N}{\varepsilon})$
 floating point operations.
\qquad \end{proof}

\begin{example}\label{lem:smoothbessel}
 Let the modified Bessel function of the second kind $K_\eta \colon (0,\infty) \to \R$, 
 \begin{equation*}
  K_\eta(x) : = \int_0^{\infty} \e^{-x \cosh(t)} \cosh(\eta t) \dd t,
 \end{equation*}
 be given, set $\eta=\frac{1}{2}$, and consider the kernel $\kappa: (0,\infty) \times (0,\infty) \rightarrow \R$,
 \begin{equation*}
  \kappa(y,\xi):=K_{\frac{1}{2}}(y \xi)=\sqrt{\frac{\pi}{2y\xi}}\e^{-y\xi}
 \end{equation*}
 which possesses a singularity for $y\xi=0$.
 Induction on $q\in \N_0$ shows the identity 
 \begin{equation*}
  \frac{\partial^q}{\partial y^q} \kappa(y,\xi) = \sqrt{\frac{\pi}{2}} (-\xi)^q \e^{-y\xi} 
	\sum_{k=0}^q \binom{q}{k}\frac{\prod_{j=0}^{k-1}(2j+1)}{2^{k}}(y\xi)^{-(2k+1)/2},
 \end{equation*}
 from which the asymptotic smoothness
 \begin{equation*}
	\left|\frac{\partial^q}{\partial y^q} \kappa(y,\xi)\right|
	 \le \sqrt{\frac{\pi}{2y\xi}} \xi^q \e^{-y\xi} 	\sum_{k=0}^q \binom{q}{k}k!(y\xi)^{-k}
	 \le  \sqrt{\frac{\pi}{2}} q! (y\xi)^{-\frac{1}{2}} y^{-q}
 \end{equation*}
 with constants $C=\sqrt{\frac{\pi}{2}}$, $\mu=1$, $\nu=0$, and $s=\frac{1}{2}$ follows.
 Theorem~\ref{thm:LapLocalError} implies the corresponding local error estimate and thus $\tilde C= 2\pi$ and $c=\frac{1}{3}$ can be chosen in \eqref{eq:locApprox}.
\end{example}

\section{Evaluation of polynomials in the unit disk}\label{sect:exp}
We are interested in evaluating the generalized polynomials $f:\C\rightarrow\C$,
\begin{equation}\label{eq:fz}
 f(z)=\sum_{k=1}^{N} \hat f_k z^{\xi_k}
\end{equation}
at nodes $z_j\in Z:=\{z\in\C: |z|\le 1\}$, $j=1,\hdots,N$, and for exponents $\xi_k\in[1,N]$, $N\in\N$, where we
exclude the nonpositive real axis $z_j\le 0$ for noninteger exponents $\xi_k$.

The main idea now consists in a combination of the discrete Laplace and a nonequispaced Fourier transform \cite{DuRo95,Bey95,St98,ElSt98,KeKuPo09,Yi09,KuMe12}.
We write
\begin{equation}\label{eq:zpolar}
 z=\e^{-y} \eip{x},\quad y\in [0,\infty),\;x\in [0,1),
\end{equation}
and note that the summation \eqref{eq:fz} is a matrix vector multiplication with the matrix
\begin{equation*}
 \zb C:=\zb A\odot\zb K
\end{equation*}
where the Fourier matrix $\zb A$ is given by
\begin{equation}\label{eq:matA}
 \zb A:=(\eip{\xi_k x_j})_{j,k=1,\hdots,N}
\end{equation} 
and the Laplace matrix $\zb K$ is given by \eqref{eq:MatK}.
Of course, the symbol $\odot$ denotes the Hadamard (pointwise) product and we set $\|\zb M\|_{1\rightarrow\infty}:=\sup\{\|\zb M\zb x\|_{\infty}:\|\zb x\|_1=1\}$.
We have the following result when approximating the factors as in the previous sections.
\begin{lemma}\label{lem:errorAdotK}
 Let $\varepsilon\in(0,1)$ and the Fourier and the Laplace matrix be approximated by
 \begin{equation*}
  \|\zb A- \zb{\tilde A}\|_{1\rightarrow\infty} \le \frac{\varepsilon}{3},\quad
  \|\zb K- \zb{\tilde K}\|_{1\rightarrow\infty} \le \frac{\varepsilon}{3},
 \end{equation*}
 then
 \begin{equation*}
  \|\zb A\odot\zb K- \zb{\tilde A}\odot \zb{\tilde K}\|_{1\rightarrow\infty} \le \varepsilon.
 \end{equation*}
\end{lemma}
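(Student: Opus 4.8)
The plan is to bound the entries of the error matrix $\zb A\odot\zb K-\zb{\tilde A}\odot\zb{\tilde K}$ by a telescoping (add-and-subtract) argument, then convert the entrywise bound into the $\|\cdot\|_{1\to\infty}$ bound. First I would write
\begin{equation*}
 \zb A\odot\zb K-\zb{\tilde A}\odot\zb{\tilde K}
 =(\zb A-\zb{\tilde A})\odot\zb K
 +\zb{\tilde A}\odot(\zb K-\zb{\tilde K})
 =(\zb A-\zb{\tilde A})\odot(\zb K-\zb{\tilde K})
 +(\zb A-\zb{\tilde A})\odot\zb{\tilde K}
 +\zb{\tilde A}\odot(\zb K-\zb{\tilde K}),
\end{equation*}
or some convenient two-term variant thereof; the point is to isolate factors that are controlled by hypothesis against factors whose entries are trivially bounded. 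Here $|\zb A_{jk}|=|\eip{\xi_k x_j}|=1$, and $|\zb K_{jk}|=|\e^{-\xi_j y_k}|\le 1$ since $\xi_j,y_k>0$; one also needs $|\zb{\tilde A}_{jk}|$ and $|\zb{\tilde K}_{jk}|$ bounded, which follows from the triangle inequality and the approximation hypotheses (each approximate entry differs from the true entry, which has modulus $\le 1$, by at most $\varepsilon/3$ in the relevant sense, hence has modulus $\le 1+\varepsilon/3\le 2$).

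The key observation is the submultiplicativity of the Hadamard product under these mixed norms: for matrices $\zb M,\zb P$ one has $\|\zb M\odot\zb P\|_{1\to\infty}\le(\max_{j,k}|\zb M_{jk}|)\,\|\zb P\|_{1\to\infty}$. Indeed, for $\|\zb x\|_1=1$,
\begin{equation*}
 \bigl|(\zb M\odot\zb P)\zb x\bigr|_j
 =\Bigl|\sum_k \zb M_{jk}\zb P_{jk} x_k\Bigr|
 \le \Bigl(\max_{j,k}|\zb M_{jk}|\Bigr)\sum_k |\zb P_{jk}||x_k|
 \le \Bigl(\max_{j,k}|\zb M_{jk}|\Bigr)\,\|\zb P\|_{1\to\infty},
\end{equation*}
using $\sum_k|\zb P_{jk}||x_k|\le(\max_k|\zb P_{jk}|)\|\zb x\|_1\le\|\zb P\|_{1\to\infty}$ since the standard basis vectors are admissible test vectors. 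Applying this to each term: $\|(\zb A-\zb{\tilde A})\odot\zb{\tilde K}\|_{1\to\infty}\le(\max|\zb{\tilde K}_{jk}|)\,\|\zb A-\zb{\tilde A}\|_{1\to\infty}$, and symmetrically for the other terms, then feeding in the hypotheses $\|\zb A-\zb{\tilde A}\|_{1\to\infty}\le\varepsilon/3$, $\|\zb K-\zb{\tilde K}\|_{1\to\infty}\le\varepsilon/3$.

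The only mild subtlety — and the main thing to get right — is bookkeeping the constants so that the three (or two) contributions sum to at most $\varepsilon$ rather than something like $4\varepsilon/3$ or $5\varepsilon/3$. Using the three-term split above naively gives $\frac{\varepsilon}{3}\cdot\frac{\varepsilon}{3}+\frac{\varepsilon}{3}\cdot 1+1\cdot\frac{\varepsilon}{3}$, which is $\le\varepsilon$ precisely because the cross term $\varepsilon^2/9$ is dominated by the slack: $\frac{2\varepsilon}{3}+\frac{\varepsilon^2}{9}\le\frac{2\varepsilon}{3}+\frac{\varepsilon}{9}<\varepsilon$ for $\varepsilon\in(0,1)$ (indeed $\varepsilon^2/9\le\varepsilon/9$). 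Thus the factor-of-three splitting in the hypothesis is exactly what absorbs the bilinear remainder, and no appeal to boundedness of $\zb{\tilde A}$ or $\zb{\tilde K}$ beyond the trivial $\le 1$ bounds on the exact matrices is even needed. I expect the whole argument to be three or four lines once the Hadamard submultiplicativity lemma is recorded.
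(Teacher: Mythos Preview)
Your approach is essentially the paper's: it uses the two-term telescope $\zb A\odot(\zb K-\zb{\tilde K})+(\zb A-\zb{\tilde A})\odot\zb{\tilde K}$ together with $\|\zb A\|_{1\to\infty}=1$ and $\|\zb{\tilde K}\|_{1\to\infty}\le 1+\varepsilon/3$, giving $\varepsilon/3+(1+\varepsilon/3)\,\varepsilon/3<\varepsilon$. One minor bookkeeping slip to fix: your displayed three-term split carries $\zb{\tilde A}$ and $\zb{\tilde K}$, yet your final tally bounds their entries by $1$ rather than $1+\varepsilon/3$; either rewrite the split with the exact matrices $\zb A,\zb K$ (the cross term then enters with a minus sign) or keep the $1+\varepsilon/3$ factors --- either way the sum is still $2\varepsilon/3+\varepsilon^2/3<\varepsilon$.
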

{\em Proof}.
 The estimate simply follows from $\|\zb A\|_{1\rightarrow\infty}=1$, $\|\zb {\tilde K}\|_{1\rightarrow\infty}\le 1+\frac{\varepsilon}{3}$, and
 \begin{equation*}
  \zb A\odot\zb K- \zb{\tilde A}\odot \zb{\tilde K}
  =
  \zb A\odot(\zb K-\zb{\tilde K})+(\zb A- \zb{\tilde A})\odot \zb{\tilde K}.\eqno\endproof
 \end{equation*}

We use the hierarchical decomposition of the discrete Laplace transform and realize
matrix vector products with matrix blocks by the following technique.
\begin{lemma}\label{lem:AdotK}
 By slight abuse of notation, let $\mathbf{K}=\mathbf{L}^Y\mathbf{K}^{Y,\Omega}\left(\mathbf{L}^{\Omega}\right)^\top$
 denote a single matrix block of the Laplace matrix and $\zb A$ the associated block of the Fourier matrix, then
 \begin{equation*}
  \left(\mathbf{A}\odot\mathbf{K}\right)\mathbf{\hat f}
  =\left(\mathbf{L}^{Y}\odot \mathbf{A} \left( \diag\mathbf{\hat f}\right)\mathbf{L}^{\Omega} \left(\mathbf{K}^{Y,\Omega}\right)^\top \right)\mathbf{1},
 \end{equation*}
 where $\mathbf{1}:=(1,\hdots,1)^{\top}\in\R^{q}$.
\end{lemma}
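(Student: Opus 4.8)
The plan is to verify the claimed identity entrywise, tracking how each index is contracted. Write $\mathbf{K} = \mathbf{L}^Y\mathbf{K}^{Y,\Omega}(\mathbf{L}^\Omega)^\top$, so that the $(i,j)$ entry is $K_{ij} = \sum_{s,r} L^Y_{is} K^{Y,\Omega}_{sr} L^\Omega_{jr}$, where $i$ ranges over the spatial nodes $y_i \in Y$, $j$ over the frequency nodes $\xi_j \in \Omega$, and $s,r \in \{0,\dots,q-1\}$ over the Chebyshev interpolation indices. The Fourier block has entries $A_{ij} = \eip{\xi_j x_i}$. Then the $i$-th component of the left-hand side is $\sum_j A_{ij} K_{ij} \hat f_j = \sum_j \sum_{s,r} A_{ij}\, L^Y_{is} K^{Y,\Omega}_{sr} L^\Omega_{jr}\, \hat f_j$.

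Next I would reorganize the right-hand side. Reading it inside out: $(\diag \mathbf{\hat f})\mathbf{L}^\Omega$ has $(j,r)$ entry $\hat f_j L^\Omega_{jr}$; multiplying by $(\mathbf{K}^{Y,\Omega})^\top$ on the right contracts the $r$ index, giving a matrix with $(j,s)$ entry $\sum_r \hat f_j L^\Omega_{jr} K^{Y,\Omega}_{sr}$; multiplying by $\mathbf{A}$ on the left contracts the $j$ index, yielding a matrix $\mathbf{B}$ with $(i,s)$ entry $B_{is} = \sum_j \sum_r A_{ij}\hat f_j L^\Omega_{jr} K^{Y,\Omega}_{sr}$. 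Taking the Hadamard product with $\mathbf{L}^Y$ gives the $(i,s)$ entry $L^Y_{is} B_{is}$, and finally right-multiplication by $\mathbf{1} \in \R^q$ sums over $s$, producing the $i$-th component $\sum_s L^Y_{is} B_{is} = \sum_s \sum_j \sum_r A_{ij}\hat f_j L^Y_{is} L^\Omega_{jr} K^{Y,\Omega}_{sr}$. This coincides with the left-hand side after reordering the finite sums, which proves the identity.

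The only genuine point to be careful about is matching the dimensions and the Hadamard product: $\mathbf{A}(\diag\mathbf{\hat f})\mathbf{L}^\Omega(\mathbf{K}^{Y,\Omega})^\top$ is a $|Y| \times q$ matrix, of the same shape as $\mathbf{L}^Y$, so the Hadamard product is well-defined; and the dependence of $A_{ij}$ on $j$ only (not on $s$ or $r$) is what lets the product $A_{ij}\hat f_j$ be absorbed into the $\diag\mathbf{\hat f}$ factor to the left of $\mathbf{L}^\Omega$. I expect no real obstacle here — this is a bookkeeping exercise in distributing the three summation indices $s$, $r$, $j$ across the factors — so the proof is essentially the chain of equalities above written compactly.

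\begin{proof}
 Write $\mathbf{K}=\mathbf{L}^Y\mathbf{K}^{Y,\Omega}(\mathbf{L}^{\Omega})^\top$ entrywise as
 $K_{ij}=\sum_{s=0}^{q-1}\sum_{r=0}^{q-1} L^Y_{is} K^{Y,\Omega}_{sr} L^{\Omega}_{jr}$, where $i$ indexes the nodes $y_i\in Y$ and $j$ the nodes $\xi_j\in\Omega$, and recall $A_{ij}=\eip{\xi_j x_i}$.
 The $i$-th component of the left-hand side is
 \begin{equation*}
  \bigl((\mathbf{A}\odot\mathbf{K})\mathbf{\hat f}\bigr)_i
  = \sum_j A_{ij} K_{ij} \hat f_j
  = \sum_{s=0}^{q-1}\sum_{r=0}^{q-1}\sum_j A_{ij}\,\hat f_j\, L^Y_{is} L^{\Omega}_{jr} K^{Y,\Omega}_{sr}.
 \end{equation*}
 On the other hand, the matrix $\mathbf{B}:=\mathbf{A}(\diag\mathbf{\hat f})\mathbf{L}^{\Omega}(\mathbf{K}^{Y,\Omega})^\top$ has shape $|Y|\times q$ and entries
 \begin{equation*}
  B_{is}=\sum_j\sum_{r=0}^{q-1} A_{ij}\,\hat f_j\, L^{\Omega}_{jr} K^{Y,\Omega}_{sr},
 \end{equation*}
 so that $\mathbf{L}^Y\odot\mathbf{B}$ is well-defined with $(i,s)$ entry $L^Y_{is} B_{is}$, and
 \begin{equation*}
  \bigl((\mathbf{L}^Y\odot\mathbf{B})\mathbf{1}\bigr)_i
  = \sum_{s=0}^{q-1} L^Y_{is} B_{is}
  = \sum_{s=0}^{q-1}\sum_{r=0}^{q-1}\sum_j A_{ij}\,\hat f_j\, L^Y_{is} L^{\Omega}_{jr} K^{Y,\Omega}_{sr}.
 \end{equation*}
 The two expressions agree, which proves the claim.
\qquad \end{proof}
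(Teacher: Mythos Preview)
Your proof is correct: it is a straightforward entrywise expansion of both sides, and the index bookkeeping is clean. The paper's own proof takes the slightly more structured route of first checking the rank-one case $\mathbf{K}=\mathbf{l}^Y(\mathbf{l}^{\Omega})^\top$ and then summing over the $q$ rank-one pieces of $\mathbf{L}^Y\mathbf{K}^{Y,\Omega}(\mathbf{L}^{\Omega})^\top$, but this is really the same computation reorganized; your direct verification is equally valid and arguably more transparent.
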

\begin{proof}
 The simplest case is $\mathbf{K}=\mathbf{l}^Y \left(\mathbf{l}^{\Omega}\right)^\top$ from which $a_{ij}\cdot k_{ij}=l^Y_i a_{ij} l^{\Omega}_j$
 and the assertion follows.
 This implies the result since
 \begin{equation*}
  \mathbf{K}=\mathbf{L}^Y \mathbf{K}^{Y,\Omega} \left(\mathbf{L}^{\Omega}\right)^\top
  =\sum_{r=1}^q \mathbf{l}^Y_r  \left(\mathbf{\tilde l}^{Y,\Omega}_r\right)^{\top},
 \end{equation*}
 where $\mathbf{l}^Y_r$ and $\mathbf{\tilde l}^{Y,\Omega}_r$ denote the columns of $\mathbf{L}^Y$ and $\mathbf{L}^{\Omega}\left(\mathbf{K}^{Y,\Omega}\right)^{\top}$,
 respectively.
\qquad \end{proof}

\begin{theorem}\label{thm:approxZ}
 Let $N\in\N$, $\varepsilon>0$, $z_j\in Z$, $j=1,\hdots,N$, $\mathbf{\hat f}\in\C^N$ and
 $\mathbf{f}:=\mathbf{C}\mathbf{\hat f}$ with
 \begin{equation*}
  \mathbf{C}=(z_j^k)_{j,k=1,\hdots,N}
 \end{equation*}
 be given. 
 Algorithm \ref{alg:XFLT} takes
 $
   \mathcal{O}\left(N\log\frac{1}{\varepsilon} \log^2\frac{N}{\varepsilon}\right)
 $
 or $\mathcal{O}\left(N\log N \log\frac{1}{\varepsilon} \log^3\frac{N}{\varepsilon}\right)$ floating point operations using the
 FFT for nonequispaced nodes in time and frequency \cite{ElSt98} or the butterfly sparse Fourier transform \cite{KuMe12}, respectively.
 Its output $\zb{\tilde f}\in\C^N$ fulfills the error estimate
 $
  \|\mathbf{f}-\mathbf{\tilde f}\|_{\infty} \le \varepsilon \|\mathbf{\hat f}\|_1.
 $
\end{theorem}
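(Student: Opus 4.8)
The plan is to decompose the matrix $\zb C = \zb A \odot \zb K$ using the factorization \eqref{eq:zpolar}, namely $z_j = \e^{-y_j}\eip{x_j}$, so that $\zb C = \zb A \odot \zb K$ with $\zb A$ the nonequispaced Fourier matrix \eqref{eq:matA} and $\zb K$ the Laplace matrix \eqref{eq:MatK}, where the exponents $\xi_k = k$ in this case are the integers $1,\dots,N$. First I would invoke the error-splitting in Lemma~\ref{lem:errorAdotK}: it suffices to approximate $\zb A$ and $\zb K$ separately, each to accuracy $\varepsilon/3$ in the $\|\cdot\|_{1\to\infty}$ norm, to obtain the stated bound $\|\zb f - \zb{\tilde f}\|_\infty \le \varepsilon\|\zb{\hat f}\|_1$. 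For $\zb A$ this is exactly what a nonequispaced FFT in time and frequency \cite{ElSt98} or the butterfly scheme \cite{KuMe12} delivers, with the quoted complexities; for $\zb K$ the partitioning of Definition~\ref{def:M} together with Theorem~\ref{thm:LapLocalError} and the interpolation-norm bound \eqref{eq:Iq2} gives blockwise approximation with $q \in \mathcal{O}(\log\frac{1}{\varepsilon})$, so the total number of $\e^{-\xi_k x_j}$-type terms is controlled by the near field alone.

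The key structural step, and the reason the Hadamard product does not destroy the fast algorithm, is Lemma~\ref{lem:AdotK}: on each admissible block $\zb K = \zb L^Y \zb K^{Y,\Omega}(\zb L^\Omega)^\top$ the Hadamard product with the corresponding block of $\zb A$ can be rewritten as
\begin{equation*}
 (\zb A \odot \zb K)\zb{\hat f} = \left(\zb L^Y \odot \zb A\,(\diag\zb{\hat f})\,\zb L^\Omega (\zb K^{Y,\Omega})^\top\right)\zb 1 .
\end{equation*}
This reduces the block application to $q$ applications of the (sub)Fourier matrix $\zb A$ to the $q$ columns of $\zb L^\Omega(\zb K^{Y,\Omega})^\top$ — i.e., $\mathcal{O}(q)$ calls to the chosen fast Fourier-type transform as a black box — followed by a pointwise product with $\zb L^Y$ and a summation over the $q$ interpolation indices. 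Iterating over the $\mathcal{O}(M) = \mathcal{O}(\log\frac{N}{\varepsilon})$ block rows of the hierarchical partition, and using that each block row contains at most $\mathcal{O}(\log\frac{1}{\varepsilon})$ blocks, the count is $\mathcal{O}(q \log\frac{1}{\varepsilon})$ transform calls overall. With $q \in \mathcal{O}(\log\frac{1}{\varepsilon})$ and a nonequispaced FFT cost of $\mathcal{O}(N\log\frac{N}{\varepsilon})$ per call (respectively $\mathcal{O}(N\log N\log^2\frac{N}{\varepsilon})$ for the butterfly variant), this yields the stated $\mathcal{O}(N\log\frac{1}{\varepsilon}\log^2\frac{N}{\varepsilon})$ and $\mathcal{O}(N\log N\log\frac{1}{\varepsilon}\log^3\frac{N}{\varepsilon})$ bounds; the near-field direct computation on $Y_M\times\Omega_M$ costs only $\mathcal{O}(N)$ since that block contains $\mathcal{O}(1)$ nodes.

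The main obstacle I expect is bookkeeping rather than a genuine mathematical difficulty: one must verify that applying the black-box transform to a \emph{submatrix} $\zb A|_{Y\times\Omega}$ (restriction to the nodes $x_j$ with $y_j\in Y$ and frequencies $\xi_k$ with $\xi_k\in\Omega$) still costs at most what the full transform costs — this is immediate for the algorithms cited, since restricting nodes and frequencies only shrinks the problem — and that summing the per-block operation counts against the geometric decay of the block sizes does not incur an extra logarithmic factor. One also needs the error from truncating to $q\in\mathcal{O}(\log\frac1\varepsilon)$ to be compatible with the $\|\cdot\|_{1\to\infty}$ norm, which follows because the number of nonzero blocks per row is bounded and \eqref{eq:Iq2} contributes only the $1+\frac{2}{\pi}\log q$ factor already absorbed into the constant in Theorem~\ref{thm:LapLocalError}. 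Assembling these pieces gives the theorem.
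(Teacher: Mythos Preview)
Your approach is essentially the paper's: split $\zb C=\zb A\odot\zb K$, control the error via Lemma~\ref{lem:errorAdotK}, and use Lemma~\ref{lem:AdotK} block by block to turn the Hadamard product into $q$ right-hand sides for a fast Fourier-type transform. The error part is fine.

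The complexity bookkeeping, however, does not add up as written. You assert ``$\mathcal{O}(q\log\frac{1}{\varepsilon})$ transform calls overall'' and then claim this yields $\mathcal{O}(N\log\frac{1}{\varepsilon}\log^2\frac{N}{\varepsilon})$; but $q\log\frac{1}{\varepsilon}\cdot N\log\frac{N}{\varepsilon}=N\log^2\frac{1}{\varepsilon}\log\frac{N}{\varepsilon}$, which is a different expression. The correct count is $Mq=\mathcal{O}\bigl(\log\frac{N}{\varepsilon}\log\frac{1}{\varepsilon}\bigr)$ transform calls, and to get only $q$ calls per block \emph{row} (rather than $q$ per block, which would cost an extra $\log\frac{1}{\varepsilon}$) you need the observation the paper makes explicitly: for fixed $m$ the contributions of all admissible $\Omega_\ell$, $\ell_m\le\ell\le L_m$, can be stacked into a single application of $\zb A^{X_m,\Omega}$ with $\Omega=\bigcup_\ell\Omega_\ell$,
\[
\sum_\ell\Bigl(\zb L^{Y_m}\odot \zb A^{X_m,\Omega_\ell}\bigl(\diag\zb{\hat f}^{\Omega_\ell}\bigr)\zb L^{\Omega_\ell}\bigl(\zb K^{Y_m,\Omega_\ell}\bigr)^{\top}\Bigr)\zb 1
=\Bigl(\zb L^{Y_m}\odot \zb A^{X_m,\Omega}\bigl(\zb{\hat F}^{\Omega_\ell}\bigl(\zb K^{Y_m,\Omega_\ell}\bigr)^{\top}\bigr)_\ell\Bigr)\zb 1,
\]
so only $q$ columns are fed to $\zb{\tilde A}$ for each $m$. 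Your remark about ``geometric decay of the block sizes'' does not help here because every sub-transform is realised by zero-padding to the full matrix $\zb A$.

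Two smaller points you skipped: nodes with $|z_j|<\varepsilon$ (i.e.\ $y_j>\log\frac{1}{\varepsilon}$) must be set to zero up front so that the remaining $y_j$ lie in $[0,\log\frac{1}{\varepsilon}]$ and Definition~\ref{def:M} applies with $y_1\xi_1\le N\log\frac{1}{\varepsilon}$; and the ``near field'' $Z_M$ is not handled by a direct $\mathcal{O}(1)$ computation but by one further call $\zb{\tilde f}^{Z_M}=\zb{\tilde A}^{X_M,[1,\xi_1]}\zb{\hat f}$, since there the Laplace factor is approximated by $1$ rather than computed exactly.
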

 
\begin{proof}
 We start by noting that $|z_j|<\varepsilon$ implies
 $|f(z_j)|<\varepsilon \|\zb{\hat f}\|_1$ in \eqref{eq:fz}, i.e.,  $f(z_j)$ can be approximated by zero.
 We collect the associated nodes $Z^0:=\{z_j\in\C: |z_j|<\varepsilon\}$ and set the result $\zb 
{\tilde f}\in\C^N$,
 restricted to these nodes, to zero
 \begin{equation*}
  \zb {\tilde f}^{Z_0}:=\zb 0.
 \end{equation*}
 Now write all nodes in polar form \eqref{eq:zpolar}, where we can assume
 $0\le y_N\le \hdots \le y_1\le \log\frac{1}{\varepsilon}$ for the rest of the proof.
 
 The decomposition of the frequency nodes and the spatial nodes in dyadic intervals
 $\Omega_\ell$, $Y_m$, $\ell,m=1,\hdots,M$, cf.~Definition \ref{def:M}, induces
 a decomposition of the unit disk $Z$ into concentric bands and a decomposition of the nodes
 $\{x_j:j=1,\hdots,N\}=\dot\cup_{m=1,\hdots,M} X_m$, $X_m:=\{x_j:y_j\in Y_m\}$.
 Accordingly, we denote restrictions of the Fourier matrix, the Fourier coefficients, and the result vector by superscripts with these sets.
 In particular, spatial nodes are close to one in modulus $z_j\in Z_M$ if and only if $y_j\in Y_M$ and thus we set
 \begin{equation*}
  \zb {\tilde f}^{Z_M}:=\zb {\tilde A}^{X_M,[1,\xi_1]} \zb {\hat f}.
 \end{equation*}
 Here and subsequently, all multiplications with submatrices of $\zb A$ are realized by padding zeros to the input vector, multiplying with $\zb A$, and
 restricting to the desired results.
 
 Regarding the most interesting part of the approximation, the decomposition from Theorem \ref{thm:approxE} yields
 \begin{equation*}
  \zb A \odot \zb {\tilde K}
  =\left(\zb A^{X_m,\Omega_{\ell}}\odot\zb L^{Y_m}\zb K^{Y_m,\Omega_{\ell}}\left(\zb L^{\Omega_{\ell}}\right)^{\top}\right)_{m=1,\ell=\ell_m}^{M-1,L_m}.
 \end{equation*}
 For notational simplicity, we apply Lemma \ref{lem:AdotK} to one block row $m$ and two (artificial) block columns $\ell=0,1$, $\Omega:=\Omega_0\cup\Omega_1$, in
 \begin{align*}
  &\pmat{  \zb A^{X_m,\Omega_0}\odot\zb L^{Y_m}\zb K^{Y_m,\Omega_0}\left(\zb L^{\Omega_0}\right)^{\top}
        & \zb A^{X_m,\Omega_1}\odot\zb L^{Y_m}\zb K^{Y_m,\Omega_1}\left(\zb L^{\Omega_1}\right)^{\top}}
        \zb {\hat f}^{\Omega}\\
  &\quad=\left(\zb L^{Y_m}\odot \zb A^{X_m,\Omega_0} \left(\diag \zb{\hat f}^{\Omega_0}\right)\zb L^{\Omega_0}\left(\zb K^{Y_m,\Omega_0}\right)^{\top}\right)\zb 1\\
  &\quad+\left(\zb L^{Y_m}\odot \zb A^{X_m,\Omega_1} \left(\diag \zb{\hat f}^{\Omega_1}\right)\zb L^{\Omega_1}\left(\zb K^{Y_m,\Omega_1}\right)^{\top}\right)\zb 1\\
  &\quad=\left(\zb L^{Y_m}\odot \zb A^{X_m,\Omega}
	  \pmat{\left(\diag \zb{\hat f}^{\Omega_0}\right)\zb L^{\Omega_0}\left(\zb K^{Y_m,\Omega_0}\right)^{\top} \\
	        \left(\diag \zb{\hat f}^{\Omega_1}\right)\zb L^{\Omega_1}\left(\zb K^{Y_m,\Omega_1}\right)^{\top}} \right)\zb 1.
 \end{align*}
 Now, the error estimate is a straightforward consequence of Lemma \ref{lem:errorAdotK}.
 
 The complexity estimate follows when considering the dominant computation in the second last line $\zb F=\hdots$ in Algorithm \ref{alg:XFLT}.
 We have $M=\mathcal{O}(\log\frac{N}{\varepsilon})$ steps in the outer loop and $q=\mathcal{O}(\log\frac{1}{\varepsilon})$ right hand sides for the
 multiplication with the approximate Fourier matrix $\zb {\tilde A}$, which computational needs are given by $\mathcal{O}(N \log N \log^2 \frac{N}{\varepsilon})$ for the butterfly sparse Fourier transform \cite{KuMe12} or by $\mathcal{O}(N\log\frac{N}{\varepsilon})$ floating point operations the so-called fast Fourier transform for nonequispaced nodes in time and frequency (NNFFT), see \cite{ElSt98}. Moreover, the constant in the $\mathcal{O}$-notation is improved for the special case
 $\xi_k=k$ by means of the nonequispaced fast Fourier transform (NFFT), see \cite{KeKuPo09}.
\qquad \end{proof}

\begin{algorithm}[ht!]
  \caption{Evaluation of polynomials in the unit disk}
	\label{alg:XFLT}
	\begin{algorithmic}
		\Require\ \\
		$\varepsilon \in (0,1)$  \Comment{target accuracy}\\
		$N \in \N$ 	\Comment{number of sampling nodes}\\
		$\xi_1 > \xi_2 \dots > \xi_N \ge 1$ \Comment{nodes in frequency domain}\\
		$z_j\in Z=\{z\in\C:|z|\le 1\},j=1,\hdots,N$ \Comment{nodes in spatial domain} \\
		$\zb {\hat f}\in\C^N$ \Comment{Fourier coefficients}
		\Ensure\ \\
		$\mathbf{\tilde f}\in\C^N$, $\mathbf{\tilde f}\approx \zb C \zb {\hat f}$ \Comment{samples in spatial domain}
		\vspace{1ex}
		\hrule
		\vspace{1ex}
		\State $M=\left\lceil \log_2 \frac{N\log 1/\varepsilon}{\varepsilon} \right\rceil +1 $ \Comment{number of decompositions}
		\State $q=\lceil\frac{1}{2}+\log_4 1/\varepsilon\rceil$		\Comment{approximation rank, Laplace transform}
		\vspace{1ex}
		\State $\zb {\tilde f}^{Z_0}=\zb 0$
		\vspace{1ex}
		\State $\Omega=[1,\xi_1]$
		\State $\zb {\tilde f}^{Z_M}=\zb {\tilde A}^{Y_M,\Omega} \zb {\hat f}$
		\vspace{1ex}
		\For{$\ell=1,\dots,M-1$}
		\State $\zb {\hat F}^{\Omega_{\ell}}=\left(\diag \zb {\hat f}^{\Omega_\ell}\right)\zb L^{\Omega_\ell} \in\C^{|\Omega_\ell|\times q}$
		\EndFor
		\vspace{1ex}
		\For{$m=1,\dots,M-1$}
		  \State $\Omega=\cup_{\ell>L_m} \Omega_\ell$
		  \State $\zb {\tilde f}^{Z_m}=\zb {\tilde A}^{X_m,\Omega} \zb {\hat f}^{\Omega}$
		  \vspace{1ex}
		  \State $\Omega=\cup_{\ell=\ell_m}^{L_m} \Omega_\ell$
		  \State $\zb F=\zb {\tilde A}^{X_m,\Omega}\left(
		  \zb {\hat F}^{\Omega_{\ell}} \left(\zb K^{Y_m,\Omega_{\ell}}\right)^{\top}\right)_{\ell=\ell_m,\hdots,L_m} \in\C^{|X_m|\times q}$
		  \vspace{1ex}
		  \State $\zb {\tilde f}^{Z_m} = \zb {\tilde f}^{Z_m} + \left(\mathbf{L}^{Y_m} \odot \zb 
F\right)\zb 1$
		\EndFor
			\end{algorithmic}

\end{algorithm}

We note in passing that minor improvements in computational complexity are possible by considering the
butterfly approximation scheme directly on the blocks $\zb A^{X_m,\Omega}$.
Lemma \ref{lem:AdotK} shows how to apply the Hadamard product of a low rank matrix and a matrix that allows for a fast algorithm to a vector.
The very same idea is used in \cite{ToWeOl16} for a polynomial conversion matrix being a Hadamard product of a approximately low rank Hankel matrix and a Toeplitz matrix which of course allows for fast multiplication by means of FFTs.
Moreover note that \cite{An13} suggests a fast algorithm for the multiplication with $\zb C$ when the nodes $z_j$ are close to the unit circle.
Regarding generalizations, we get a fast algorithm for the multiplication with the adjoint matrix $\zb C^*$ simply using the adjoint algorithms for the
matrices $\zb K$ and $\zb A$.
In particular, this allows to evaluate $g:\C\rightarrow\C$,
\begin{equation*}
 g(\xi)=\sum_{j=1}^{N} \hat g_j z_j^{\xi}
\end{equation*}
at nodes $\xi_k\in[1,N]$, $k=1,\hdots,N$, and for given $z_j\in Z=\{z\in\C: |z|\le 1\}$ and coefficients $\hat g_j\in\C$ efficiently.
Possible applications include the fast evaluation of certain special functions when approximated as in \cite{BeMo05} on the real line.
The most general case with kernel
\begin{equation*}
 \e^{(\xi+\ii \eta)(x+\ii y)}=\e^{\xi x}\e^{-by}\e^{\ii(\eta x+\xi y)}
\end{equation*}
allows for efficient treatment when $(\xi,\eta)$ as well as $(x,y)$ samples a smooth contour in $\C$ and are in appropriate ranges.
Then the last term leads to $2d$ sparse FFT \cite{Yi09,KuMe12} and we might apply the Hadamard product idea twice.

\section{Numerical results}\label{sect:num}
	The implementation of Algorithm \ref{alg:FLT} and Algorithm
	\ref{alg:XFLT} is realized in MATLAB 2013a. We use one node of a Intel Xeon, 128GByte, 2.2GHz, Scientific Linux release 6.5 (Carbon), for all numerical experiments.
	We draw random uniformly distributed coefficients $\hat f_k$, equispaced frequencies $\xi_k=k$, $k=1,\dots,N$, and we draw random nodes $x_j \in [0,1)$, $j=1,\dots,N$ and
	random nodes $0\le y_N \le y_{N-1} \le \dots \le y_1 \le (2q-1) \log 2$, which ensures $y_j \in [0,\log{1/\varepsilon}]$.

	We consider the relative error
	\begin{equation}\label{eq:defE}
	E:=\frac{\| \zb{f} - {\zb{\tilde f}}\|_\infty }{\|\zb{\hat{f}}\|_1 },
	\end{equation}
	where $\mathbf{f}$, ${\mathbf{\tilde f}} \in \C^N $ denote the exact result and its approximation, respectively,
	see Theorem \ref{thm:approxE} and Theorem \ref{thm:approxZ}.
	Figure \ref{fig:error}\subref{fig:errora},\subref{fig:errorc},\subref{fig:errord} shows the quantity $E$ and the corresponding upper bound
	in dependence of the approximation rank $q=1,\hdots,20$ for a fixed bandwidth $N=2^{14}$.
	The error of Algorithm \ref{alg:FLT} is shown in Figure \ref{fig:error}\subref{fig:errora}.
	The daggers represent the numerical errors, the solid line the theoretical
	estimate, cf.~Lemma \ref{cor:approxE}, and the dashed line a least square fit $E \approx C_0 C^{-q}$, $C>4$.
	
	
	Figure \ref{fig:error}\subref{fig:errorc},\subref{fig:errord} illustrates the error of Algorithm \ref{alg:XFLT} using the butterfly fast Fourier
	transform (BSFFT) and the nonequispaced fast Fourier transform (NFFT), respectively.
	The error of Algorithm \ref{alg:XFLT} in combination with the BSFFT can be estimated by Lemma \ref{lem:errorAdotK} and \cite[Theorem 3.1]{KuMe12}, which
	supports the choice $p=\lceil q/2\rceil +3$ for the approximation rank of the BSFFT.
	Indeed, this leads to an upper bound $C 4^{-q}$ for the total error which is however not shown in Figure \ref{fig:error}\subref{fig:errorc} since the
	theoretical constant $C$ is way too large.
	Within Algorithm \ref{alg:XFLT} in combination with the NFFT, we choose the Kaiser-Bessel window function, an approximation parameter
	$m=\lceil q/3 \rceil$, and an oversampling factor $2$ for the NFFT, see \cite{KeKuPo09}.
	This results in a theoretical upper bound of the error as shown by the solid line in Figure \ref{fig:error}\subref{fig:errord}.

\setcounter{lofdepth}{1}
		\begin{figure}[ht!]
		\subfigure[Algorithm \ref{alg:FLT}, total error.]{\label{fig:errora} \includegraphics[width=0.31\textwidth]{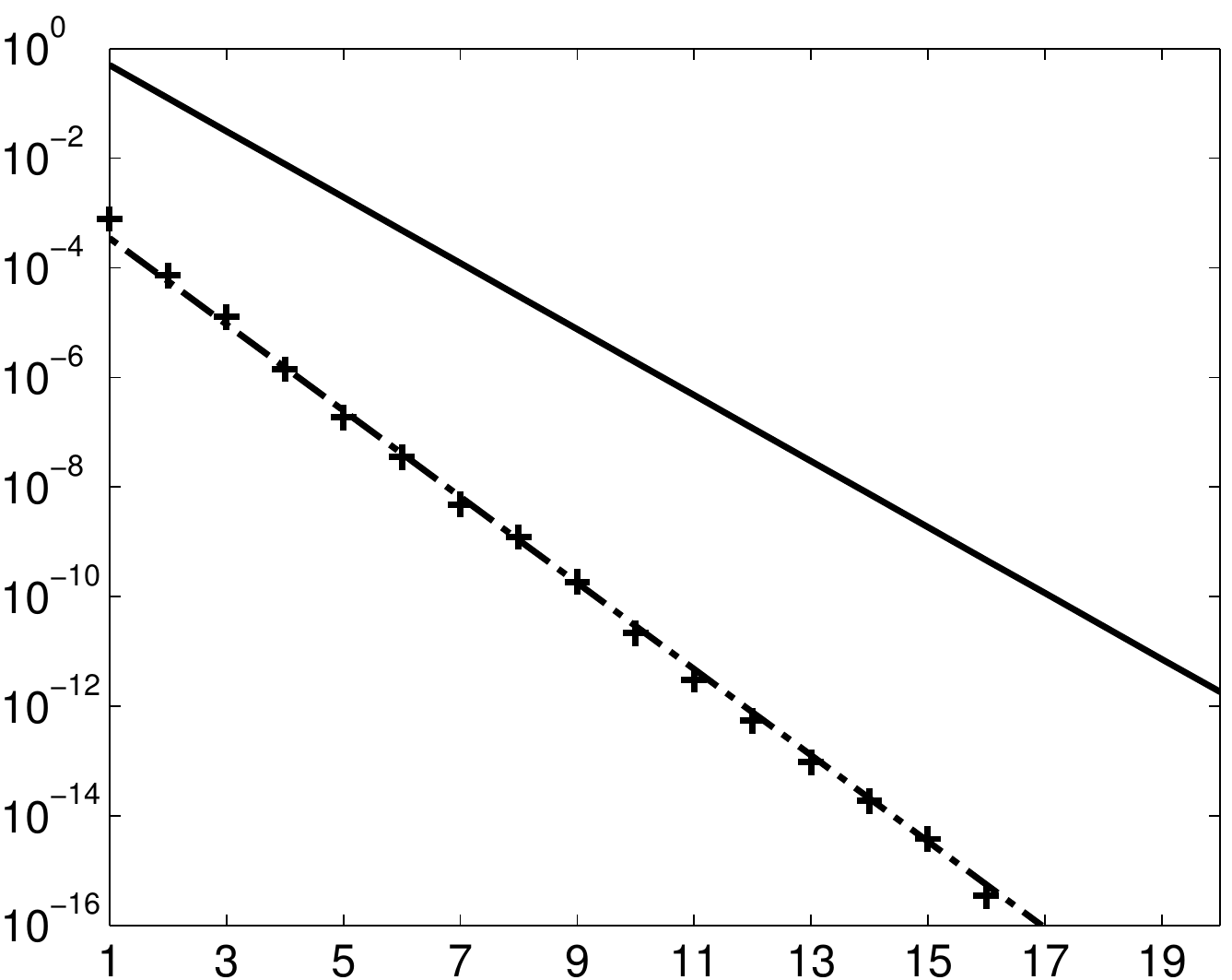} }									
		\subfigure[Alg. \ref{alg:XFLT} (BSFFT), total error.]{\label{fig:errorc} \includegraphics[width=0.31\textwidth]{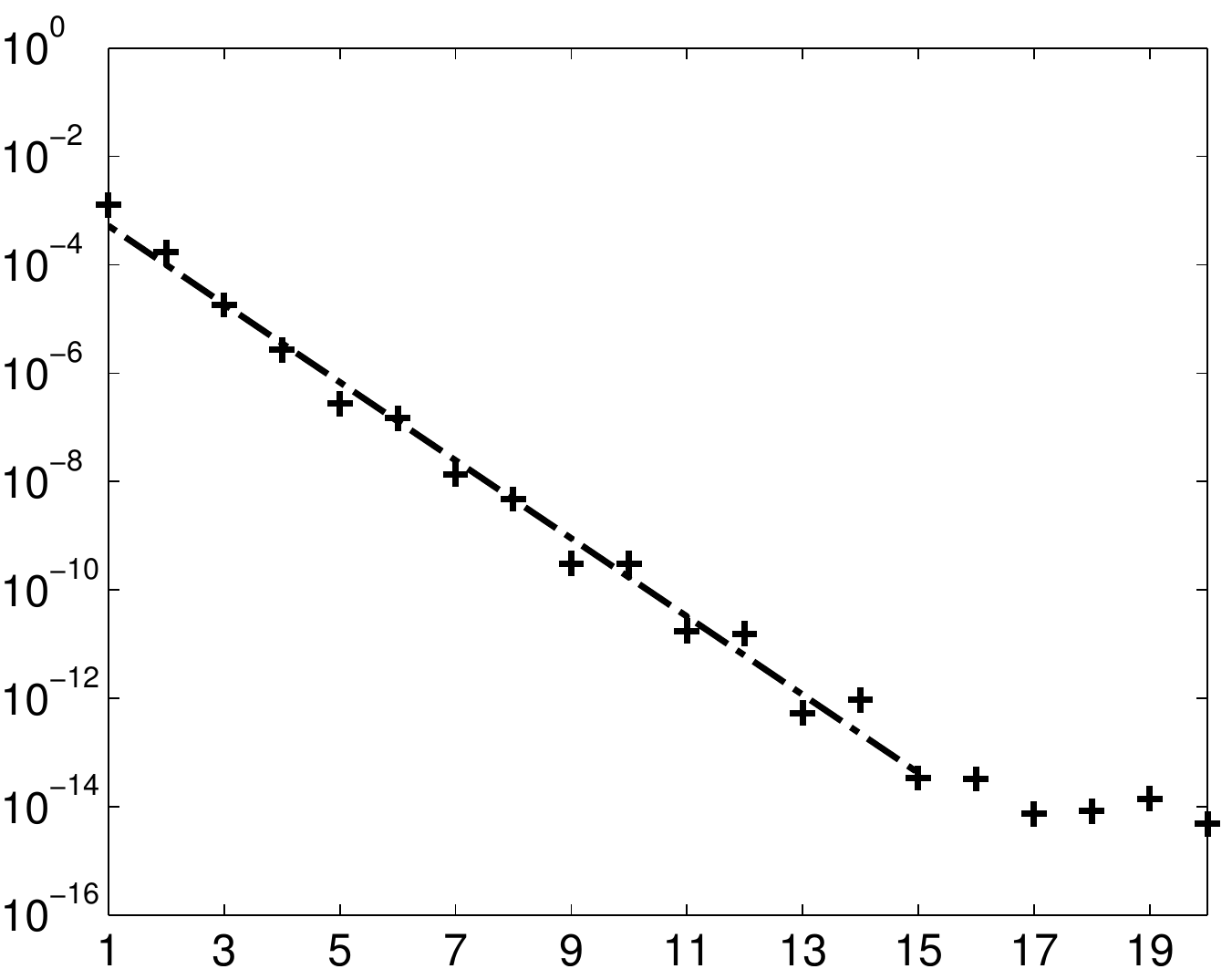}}
		\subfigure[Algorithm \ref{alg:XFLT} (NFFT), total error.]{\label{fig:errord} \includegraphics[width=0.31\textwidth]{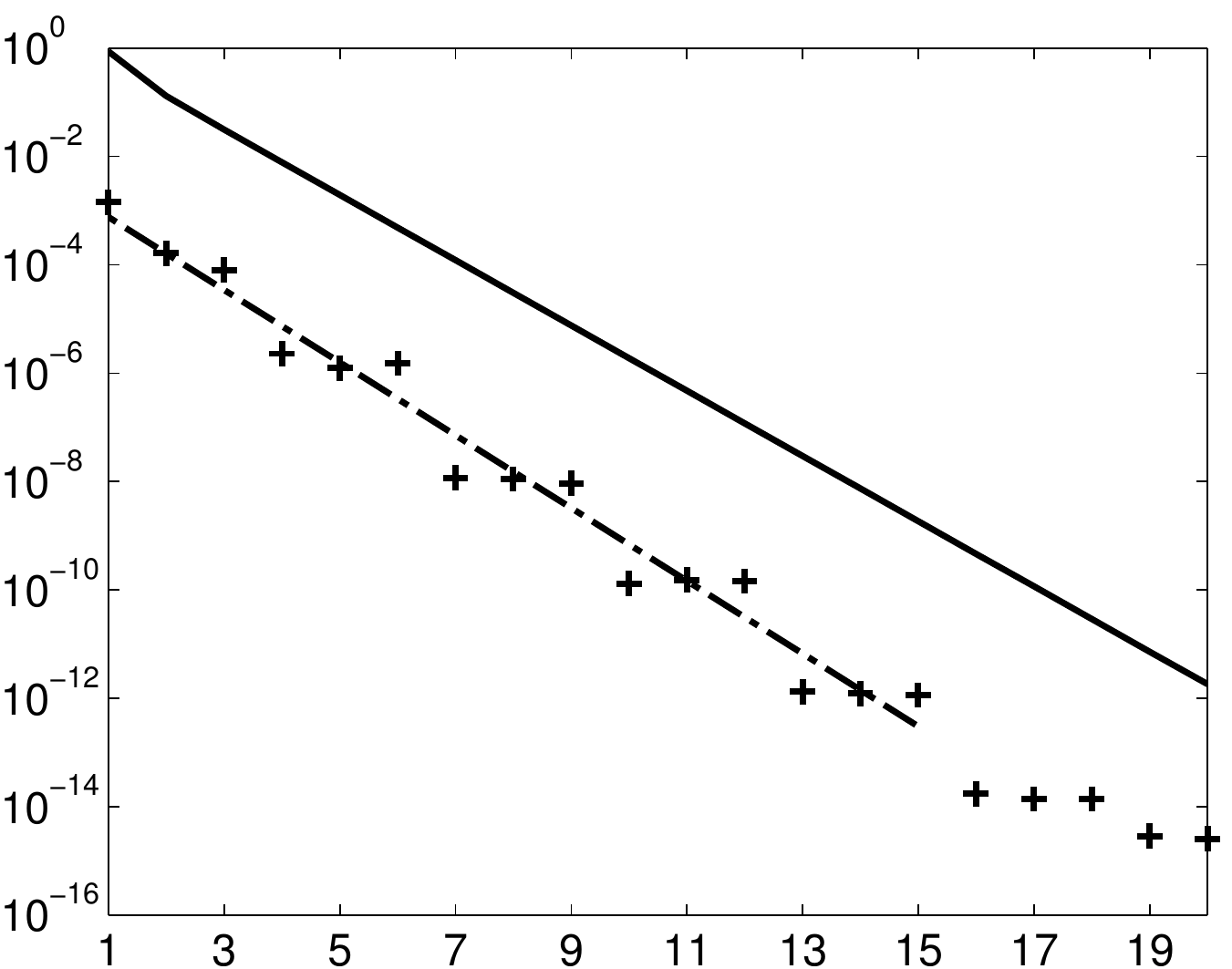}}\\
		\caption{Approximation error \eqref{eq:defE} with respect to the local expansion degree $q$.}
		\label{fig:error}
	\end{figure}
	
	In a second series of experiments, we compare the computational times, measured by the MATLAB functions \lstinline{tic} and \lstinline{toc}, of the
	naive evaluations and Algorithm \ref{alg:FLT} and \ref{alg:XFLT} with respect to increasing problem size $N$.
	Figure \ref{fig:times1D} shows the timings for the naive matrix vector multiplication using entrywise and row-wise evaluations of the matrix, both
	shown as diamonds.
	Figure \ref{fig:times1D}\subref{fig:times1Db} present the computational times for Algorithm \ref{alg:FLT}, i.e., the multiplication with the
	matrix $\zb K$. Clearly, the complexity is linear in the problem size.
	Figure \ref{fig:times1D}\subref{fig:times1Dd} show the same for Algorithm \ref{alg:XFLT} using the BSFFT (symbol $+$) and the NFFT ($*$).
	Both variants scale almost linear in $N$ but still, the constant in the BSFFT is much larger than in the NFFT case.
	\begin{figure}[ht!]
			\subfigure[Algorithm \ref{alg:FLT}, computational times, $q=8$.]{\label{fig:times1Db} \includegraphics[width=0.45\textwidth]{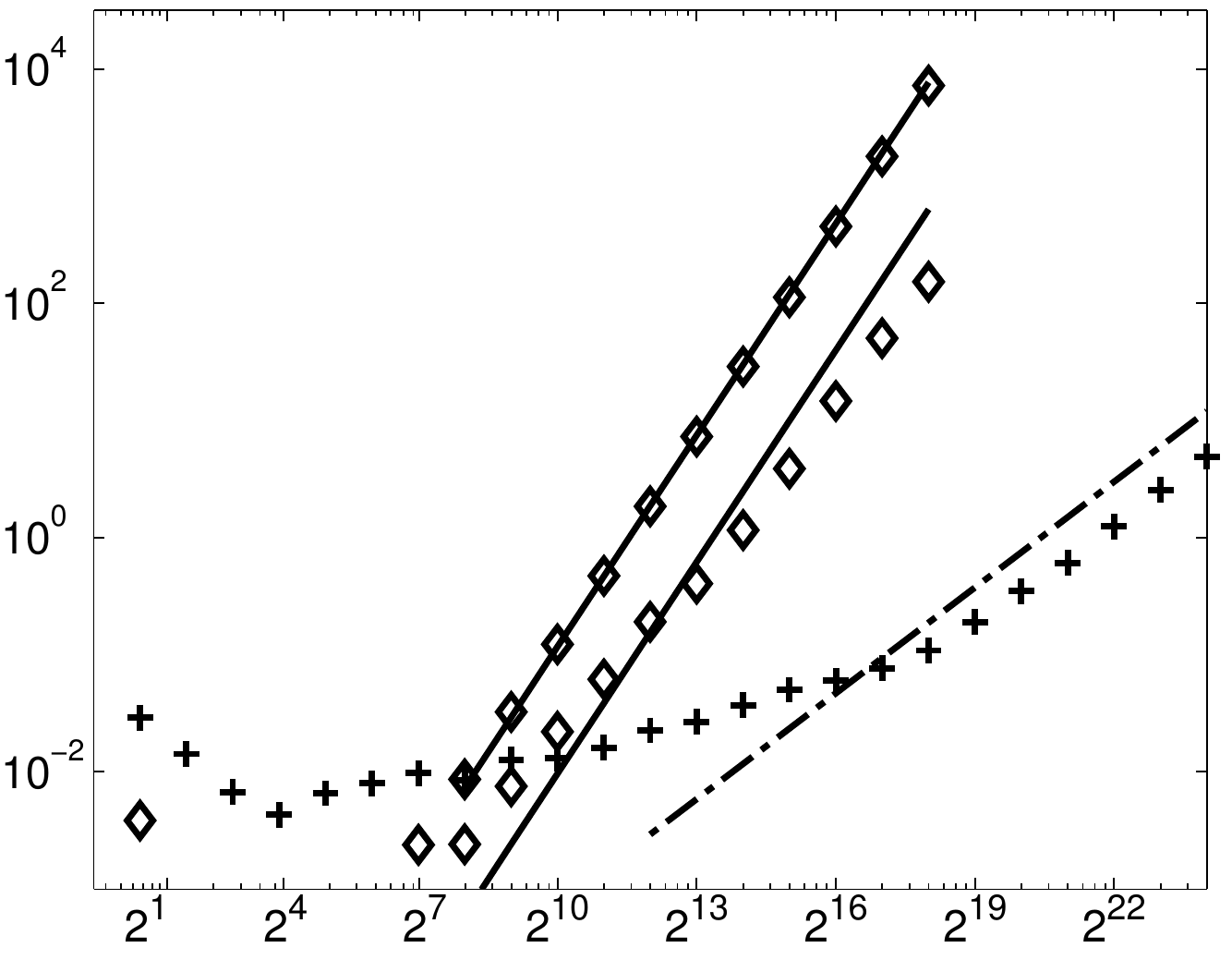} }
			\subfigure[Algorithm \ref{alg:XFLT}, computational times, $q=8$.]{\label{fig:times1Dd} \includegraphics[width=0.45\textwidth]{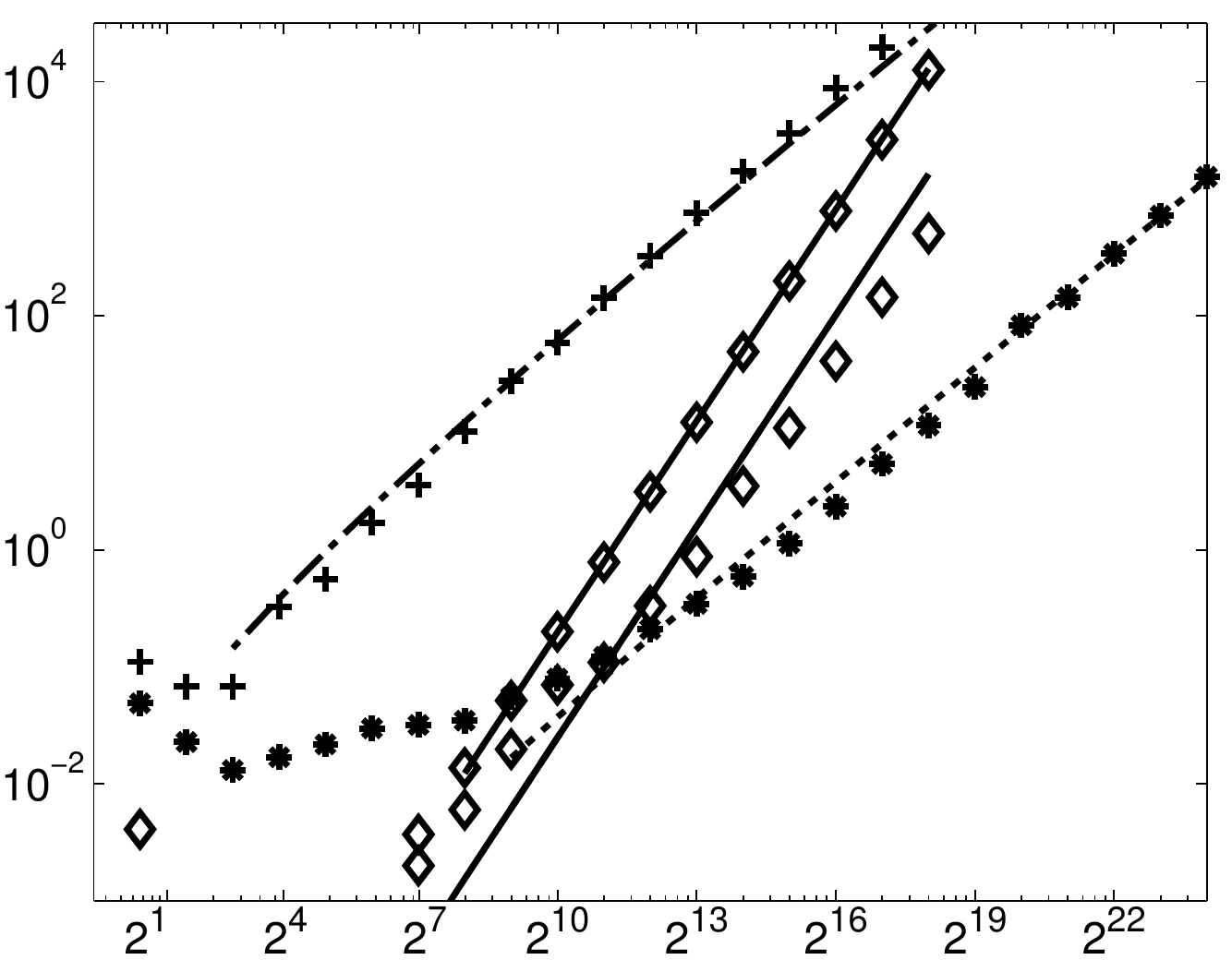}}
			\caption{Computational times for fixed approximation rank $q=8$ with respect to the problem size $N$.}
			\label{fig:times1D}
	\end{figure}

\section{Summary}\label{sect:sum}

Recently, the butterfly approximation scheme and hierarchical approximations have
been proposed for the efficient computation of integral transforms with oscillatory
and with asymptotically smooth kernels.
In the first part of this paper, we summarized and slightly improved the fast discrete Laplace transform \cite{Ro88}.
We combined this Laplace transform with a generalized fast Fourier transform in a purely algebraic fashion where we used the decomposition of the Laplace transform explicitly and a small number of
generalized FFTs as black box. 
In particular, this allows for the fast evaluation of a polynomial, given by its monomial coefficients, at many nodes in the
complex unit disk. Alternatively, we might interpret this as an FFT with nonequispaced nodes in the upper half plane.
In this situation, the butterfly Fourier transform could be replaced by the nonequispaced FFT which is both asymptotically as well as
with respect to actual computation times faster.


\begin{appendix}

\appendix\section*{} \label{App:AppendixA}
The following results for the exponential kernel are a simplification and minor improvement of \cite{Ro88} and keep the paper self contained.
\begin{lemma}{\cite[eq. (36)]{Ro88}}\label{cor:approxE}
 Let $q\in\N$, $A,B\subset[0,\infty)$ be admissible, and $\kappa(y,\xi)=\e^{-\xi y}$, then
 \begin{equation*}
  \left\|\kappa - \mathcal{I}_q^{A\times B}\kappa\right\|_{C(A\times B)} \le 2^{1-2q}.
 \end{equation*}
\end{lemma}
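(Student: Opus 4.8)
The plan is to re-run the splitting used in the proof of Theorem~\ref{thm:LapLocalError}, but to estimate the one-dimensional interpolation errors for the exponential kernel \emph{exactly} rather than through the generic constant $C\mu^q q^\nu$; for $\kappa(y,\xi)=\e^{-\xi y}$ these partial errors carry an extra factor $q^q\e^{-q}/q!$, and that factor is small enough to absorb the Lebesgue-constant factor $2+\tfrac2\pi\log q$ which otherwise survives and produces the logarithmic term in Theorem~\ref{thm:LapLocalError}.

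First I would bound the partial error in the $y$-variable. Fix $\xi\in B$ and put $g(y):=\e^{-\xi y}$, so $g^{(q)}(y)=(-\xi)^q\e^{-\xi y}$ and $\|g^{(q)}\|_{C(A)}=\xi^q\e^{-\xi\dist(A,0)}$. Abbreviating $d:=\dist(A,0)$, the error formula \eqref{eq:Iq} together with admissibility $\diam(A)\le d$ gives
\begin{equation*}
 \bigl\|g-\IpolI_q^A g\bigr\|_{C(A)}
 \le\frac{\diam(A)^q}{2^{2q-1}q!}\,\xi^q\e^{-\xi d}
 \le\frac{(d\xi)^q\e^{-d\xi}}{2^{2q-1}q!}
 \le\frac{q^q\e^{-q}}{2^{2q-1}q!},
\end{equation*}
since $\sup_{t\ge0}t^q\e^{-t}=q^q\e^{-q}$. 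Taking the supremum over $\xi\in B$ yields $\|\kappa-(\IpolI_q^A\otimes\IpolI)\kappa\|_{C(A\times B)}\le q^q\e^{-q}/(2^{2q-1}q!)$, and the symmetric computation in $\xi$ (using admissibility of $B$) gives the same bound for $\|\kappa-(\IpolI\otimes\IpolI_q^B)\kappa\|_{C(A\times B)}$.

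Next I would combine the two estimates through the nested form $\IpolI_q^{A\times B}=(\IpolI_q^A\otimes\IpolI)(\IpolI\otimes\IpolI_q^B)$. With $P:=\IpolI_q^A\otimes\IpolI$ and $Q:=\IpolI\otimes\IpolI_q^B$, the identity $\kappa-PQ\kappa=(\kappa-P\kappa)+P(\kappa-Q\kappa)$ and the operator bound $\|P\|\le1+\tfrac2\pi\log q$ from \eqref{eq:Iq2} give
\begin{equation*}
 \bigl\|\kappa-\IpolI_q^{A\times B}\kappa\bigr\|_{C(A\times B)}
 \le\Bigl(2+\tfrac2\pi\log q\Bigr)\frac{q^q\e^{-q}}{2^{2q-1}q!}
 =2^{1-2q}\cdot\frac{q^q\e^{-q}}{q!}\Bigl(2+\tfrac2\pi\log q\Bigr).
\end{equation*}

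It then only remains to verify the scalar inequality $\frac{q^q\e^{-q}}{q!}\bigl(2+\tfrac2\pi\log q\bigr)\le1$ for every $q\in\N$. Using Stirling's lower bound $q!>\sqrt{2\pi q}\,(q/\e)^q$, the left-hand side is at most $(2+\tfrac2\pi\log q)/\sqrt{2\pi q}$, and $2+\tfrac2\pi\log q<\sqrt{2\pi q}$ holds for all $q\ge1$ since the difference is positive at $q=1$ and increasing. This scalar estimate is the only genuinely computational step; the rest is a sharpened bookkeeping version of Theorem~\ref{thm:LapLocalError}, and the one point that needs care is to take the suprema over $y$ and over $\xi$ in that order, so that no spurious factor $2^q$ is introduced through the ratio $\diam(B)/\dist(B,0)\le1$.
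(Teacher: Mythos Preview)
Your proof is correct and follows essentially the same route as the paper's. The paper phrases the derivative bound $\xi^q\e^{-\xi y}\le q^q\e^{-q}y^{-q}\le q!\,y^{-q}/\sqrt{2\pi q}$ as ``the exponential kernel is asymptotically smooth with $C=1/\sqrt{2\pi}$, $\mu=1$, $s=0$, $\nu=-1/2$'' and then invokes Theorem~\ref{thm:LapLocalError} directly, whereas you re-run the splitting $\kappa-PQ\kappa=(\kappa-P\kappa)+P(\kappa-Q\kappa)$ by hand; both arguments land on the identical scalar inequality $(2+\tfrac{2}{\pi}\log q)/\sqrt{2\pi q}\le 1$ via Stirling.
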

\begin{proof}
 For $y>0$, we have the necessary condition for a local maximum of $|\partial_y^q \kappa(y,\xi)|$, i.e., 
\begin{equation*}
 \left|\partial_{\xi} \partial_y^q \kappa(y,\xi)\right| = \xi^{q-1} \e^{-\xi y} \left|q-\xi y\right| =0
\end{equation*}
if and only if $\xi=q/y$. Using Stirling's approximation, we conclude the globally valid bound
\begin{equation*}
 \left|\partial_y^q \kappa(y,\xi)\right| = \xi^q \e^{-\xi y} \le \frac{q^q}{\e^q y^q} \le \frac{1}{\sqrt{2\pi q}} q! y^{-q},
\end{equation*}
 i.e., the exponential kernel is asymptotically smooth with constants $C=1/\sqrt{2\pi}$, $\mu=1$, $s=0$, and $\nu=-1/2$.
 The result follows from Theorem \ref{thm:LapLocalError} since $(2+\frac{2}{\pi}\log q)/\sqrt{2\pi q}\le 1$.
\qquad \end{proof}

\begin{lemma}{\cite[Sect.~4]{Ro88}}\label{lem:approxE}
 Let $\varepsilon,y_1,\xi_1>0$ be given, use the notation of Definition \ref{def:M} and Lemma \ref{cor:approxE} and
 set $q:=\lceil\frac{1}{2}+\log_4 1/\varepsilon\rceil$,
 \begin{equation}\label{eq:sumidx}
   \ell_m  := \max(1,\lfloor \log_2(y_1 \xi_1) - m - \log_2(\log 1/\varepsilon)\rfloor + 1), \qquad
   L_m  := M-m
 \end{equation}
for all $m=1,\dots,M-1$,
then
 \begin{romannum}
  \item $y \in Y_M$ and $\xi \in \Omega$ (and analogously $y \in Y$ and $\xi \in \Omega_M$) implies
      $1-\e^{-y \xi} \le \varepsilon$,
  \item $y \in Y_m$, $m=1,\hdots,M-1$, and $\xi \in \Omega_l$, $l< \ell_m$, implies
	$\e^{-y \xi} \leq \varepsilon$,
  \item $y \in Y_m$, $m=1,\hdots,M-1$, and $\xi \in \Omega_l$,$\ell_m \le l \le L_m$, implies
        $\left|\e^{-y \xi}-\mathcal{I}_q^{Y_m\times \Omega_l}\kappa(y,\xi)\right| \le \varepsilon$,
  \item $y \in Y_m$, $m=1,\hdots,M-1$, and $\xi \in \Omega_l$, $l> L_m$, implies
        $1-\e^{-y \xi} \le \varepsilon$.
 \end{romannum}
\end{lemma}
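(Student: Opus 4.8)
The plan is to treat the four cases i)--iv) separately; each one collapses to an elementary estimate on dyadic scales, combined either with the elementary bound $1-\e^{-t}\le t$ for $t\ge0$ (cases i and iv) or with the interpolation bound of Lemma \ref{cor:approxE} (case iii). The standing toolbox is just the endpoint inequalities read off from Definition \ref{def:M}: for $y\in Y_m$ with $m<M$ one has $y_1/2^m<y\le y_1/2^{m-1}$, for $y\in Y_M$ one has $0\le y\le y_1/2^{M-1}$, and analogously for $\xi\in\Omega_l$; together with $2^{M-1}\ge y_1\xi_1/\varepsilon$ (since $M=\lceil\log_2\frac{y_1\xi_1}{\varepsilon}\rceil+1$) and $2^{2q-1}\ge1/\varepsilon$ (since $q\ge\frac12+\log_4\frac1\varepsilon$ and $2\log_4 x=\log_2 x$).

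For i), if $y\in Y_M$ and $\xi\in\Omega$ then $y\xi\le y_1\xi_1/2^{M-1}\le\varepsilon$, and symmetrically if $y\in Y$ and $\xi\in\Omega_M$; hence $1-\e^{-y\xi}\le y\xi\le\varepsilon$. For iv), $l>L_m=M-m$ forces $m+l\ge M+1$, so using the upper endpoints $y\le y_1/2^{m-1}$ and $\xi\le\xi_1/2^{l-1}$ we get $y\xi\le y_1\xi_1/2^{m+l-2}\le y_1\xi_1/2^{M-1}\le\varepsilon$, and again $1-\e^{-y\xi}\le y\xi\le\varepsilon$.

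For iii), I first observe that $\diam(Y_m)=\dist(Y_m,0)=y_1/2^m$ and $\diam(\Omega_l)=\dist(\Omega_l,0)=\xi_1/2^l$, so $Y_m$ and $\Omega_l$ are admissible, and since $m,l\le M-1<M$ neither of them is a near-field block. Lemma \ref{cor:approxE} then gives $\|\kappa-\mathcal{I}_q^{Y_m\times\Omega_l}\kappa\|_{C(Y_m\times\Omega_l)}\le2^{1-2q}\le\varepsilon$, the last inequality being exactly $2^{2q-1}\ge1/\varepsilon$. (The hypothesis $\ell_m\le l$ is not actually used here; only $l\le M-1$ matters.)

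The one case needing real care is ii), because of the floor and the $\max$ in $\ell_m=\max(1,\lfloor\log_2(y_1\xi_1)-m-\log_2(\log\frac1\varepsilon)\rfloor+1)$. If $\ell_m=1$ there is no integer $l<\ell_m$ and the claim is vacuous, so I assume $\ell_m=\lfloor\log_2(y_1\xi_1)-m-\log_2(\log\frac1\varepsilon)\rfloor+1$; then $l\le\ell_m-1=\lfloor\log_2(y_1\xi_1)-m-\log_2(\log\frac1\varepsilon)\rfloor\le\log_2(y_1\xi_1)-m-\log_2(\log\frac1\varepsilon)$, i.e.\ $2^{m+l}\le y_1\xi_1/\log\frac1\varepsilon$. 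Now the strict lower endpoints $y>y_1/2^m$, $\xi>\xi_1/2^l$ give $y\xi>y_1\xi_1/2^{m+l}\ge\log\frac1\varepsilon$, hence $\e^{-y\xi}<\varepsilon$. I expect this bookkeeping around $\ell_m$---peeling off the floor correctly and noting that the $\max$ with $1$ only triggers when the statement is empty anyway---to be the main (and essentially the only) obstacle; everything else is direct substitution of the dyadic endpoints.
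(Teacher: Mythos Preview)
Your proof is correct and follows essentially the same approach as the paper: the same dyadic endpoint bounds, the same reduction of iii) to Lemma~\ref{cor:approxE} via admissibility, and the same arithmetic in ii) and iv). Your treatment is in fact slightly more careful in two places the paper glosses over: you explicitly dispose of the vacuous case $\ell_m=1$ in ii), and you spell out that $q\ge\frac12+\log_4\frac1\varepsilon$ gives $2^{2q-1}\ge 1/\varepsilon$ in iii); the paper uses $1-\e^{-\varepsilon}\le\varepsilon$ where you use the equivalent $1-\e^{-t}\le t$.
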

{\em Proof}.
 The individual estimates can be proven as follows.
 At first, let $\xi \in [0,\xi_1]$  and $y \in Y_M=[0,y_1/2^{M-1}]$.
 Using $M \geq \log_2 \frac{y_1\xi_1}{\varepsilon} +1$, we obtain $0 \leq \xi y \leq y_1 \xi_1/2^{M-1}\leq \varepsilon$
 and finally case i) since
 \begin{equation*}
    1 \geq \e^{-y\xi} \geq \e^{-\varepsilon}=\sum_{k=0}^{\infty} \frac{(-\varepsilon)^k}{k!}\geq 1-\varepsilon.
 \end{equation*}	

 Now let $y\in Y_m$, $\xi \in \Omega_\ell$. The condition $\ell \leq \lfloor \log_2(y_1 \xi_1) - m - \log_2(\log 1/ \varepsilon) \rfloor$
 implies 
  \begin{equation*}
   \e^{- \frac{y_1 \xi_1}{2^m 2^\ell }} \leq \varepsilon
  \end{equation*}
 and due to $y\geq \frac{y_1}{2^m}$, $\xi\geq \frac{\xi_1}{2^\ell}$ assertion ii).
 The third result follows from Lemma \ref{cor:approxE} since the intervals $Y_m$, $\Omega_\ell$, $\ell,m =1,\dots,M-1$, are admissible.
 
 Finally, we have $y\in Y_m$, $\xi\in\Omega_{\ell}$, $\ell-1 \geq \lceil \log_2(\xi_1 y_1) + \log_2(1/\varepsilon) \rceil - (m-1)$ and thus
 \begin{equation*}
  1-\e^{-y\xi}\le 1-\e^{-\frac{y_1}{2^{m-1}} \frac{\xi_1}{2^{\ell-1}}} \le 1-\e^{-\varepsilon} \le \varepsilon. \eqno\endproof
 \end{equation*}

\begin{figure}[H]
	\centering
	\includegraphics[width=0.8\textwidth]{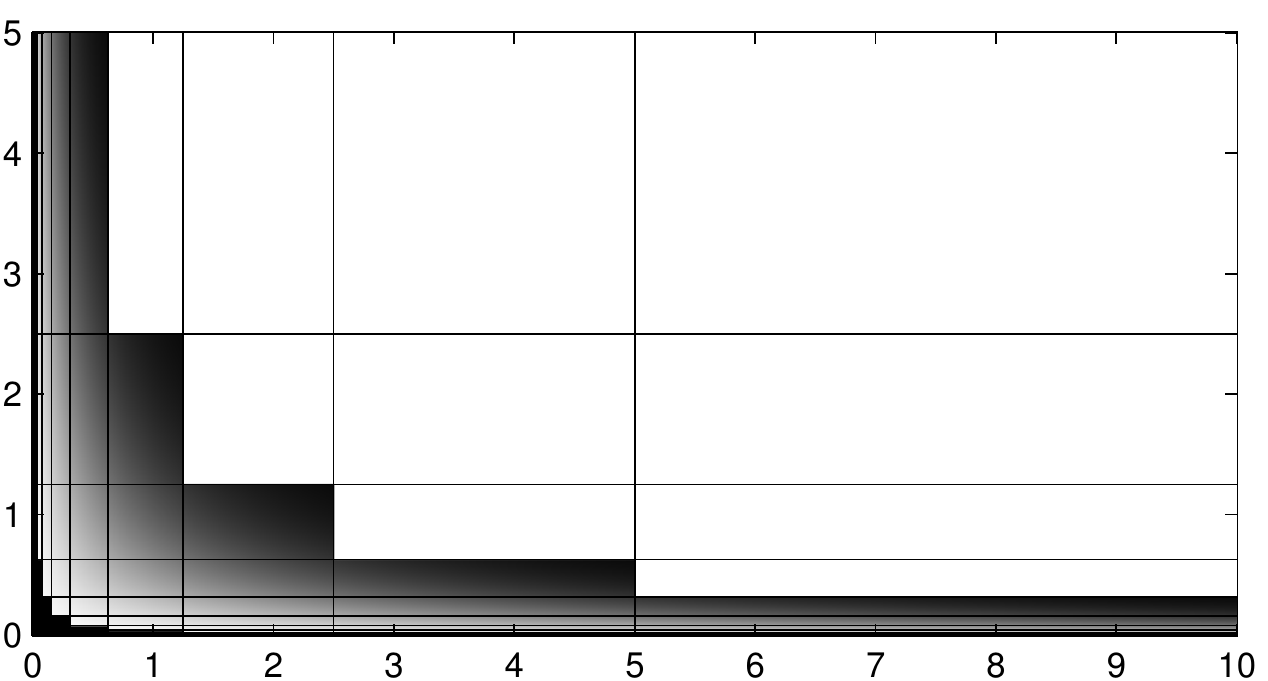}
	\caption{Kernel function and dyadic decomposition, the cases i),ii)) and iv) are shown in black and white, respectively.}
	\label{fig:approx}
\end{figure}

\begin{theorem}{\cite[Sect.~7]{Ro88}}\label{thm:approxE}
 Let $N\in\N$, $\varepsilon>0$, $\mathbf{\hat f}\in\C^N$, 
 $y_1>y_2>\hdots>y_N>0$, $\xi_1>\xi_2>\hdots>\xi_N>0$,  and
 $\kappa(y,\xi):=\e^{-\xi y}$ be given.
 Set $M:=\left\lceil \log_2 \frac{y_1 \xi_1}{\varepsilon} \right\rceil +1$,
 $q:=\lceil\frac{1}{2}+\log_4 1/\varepsilon\rceil$, and
 $\mathbf{\tilde f}=(\tilde{f}(y_i))_{i=1,\hdots,N}$ for the function $\tilde f:Y\rightarrow \C$,
 \begin{equation}\label{eq:tildef}
  \tilde f (y) = \begin{cases}
    \sum_{j=1}^{N} \hat f_j & y \in Y_M,\\
    \sum_{\ell=\ell_m}^{L_m} \sum_{\xi_j \in \Omega_\ell} \hat f_j \mathcal{I}^{Y_m \times \Omega_\ell} \kappa(y,\xi_j)
     + \sum_{\ell > L_m} \sum_{\xi_j \in \Omega_\ell} \hat f_j & y\in Y_m,\; 1\le m<M.
                 \end{cases}
 \end{equation}
 Then the error estimate
 $
  \|\mathbf{f}-\mathbf{\tilde f}\|_{\infty} \le \varepsilon \|\mathbf{\hat f}\|_1
 $ 
 holds true.
\end{theorem}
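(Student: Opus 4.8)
The plan is a term-by-term triangle inequality: split the frequency range $(0,\xi_1]$ along the dyadic bands $\Omega_1,\dots,\Omega_M$ of Definition \ref{def:M} and apply the four cases of Lemma \ref{lem:approxE} band by band.

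First I would fix an evaluation node $y=y_i$, which lies in exactly one band $Y_m$. If $y\in Y_M$ I treat it directly: there $\tilde f(y)=\sum_{j=1}^N\hat f_j$, so
\begin{equation*}
 |f(y)-\tilde f(y)|\le\sum_{j=1}^N|\hat f_j|\,\bigl|1-\e^{-\xi_j y}\bigr|\le\varepsilon\,\|\mathbf{\hat f}\|_1
\end{equation*}
by Lemma \ref{lem:approxE}\,i) applied with $\xi_j\in[0,\xi_1]$. For the remaining nodes, $y\in Y_m$ with $1\le m<M$, I would write the exact value as
\begin{equation*}
 f(y)=\sum_{j=1}^N\hat f_j\,\e^{-\xi_j y}=\sum_{\ell=1}^{M}\sum_{\xi_j\in\Omega_\ell}\hat f_j\,\e^{-\xi_j y},
\end{equation*}
using that the bands $\Omega_1,\dots,\Omega_{M-1},\Omega_M$ partition $[0,\xi_1]\supseteq\{\xi_1,\dots,\xi_N\}$.

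Comparing with \eqref{eq:tildef}, the contribution of an index $j$ with $\xi_j\in\Omega_\ell$ to $\tilde f(y)$ is $\hat f_j\,\mathcal I_q^{Y_m\times\Omega_\ell}\kappa(y,\xi_j)$ when $\ell_m\le\ell\le L_m$, it is $\hat f_j$ when $\ell>L_m$, and it is $0$ when $\ell<\ell_m$. I would then bound the per-index error $\bigl|\hat f_j\,\e^{-\xi_j y}-(\text{contribution of }j)\bigr|$ in each regime: it equals $|\hat f_j|\,\e^{-\xi_j y}\le\varepsilon|\hat f_j|$ when $\ell<\ell_m$ by Lemma \ref{lem:approxE}\,ii); it equals $|\hat f_j|\,\bigl|\e^{-\xi_j y}-\mathcal I_q^{Y_m\times\Omega_\ell}\kappa(y,\xi_j)\bigr|\le\varepsilon|\hat f_j|$ when $\ell_m\le\ell\le L_m$ by iii), the blocks $Y_m\times\Omega_\ell$ being admissible; and it equals $|\hat f_j|\,\bigl|1-\e^{-\xi_j y}\bigr|\le\varepsilon|\hat f_j|$ when $\ell>L_m$ by iv) (or by i) for the near band $\Omega_M$, which always lies in this range since $L_m=M-m<M$). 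Since the index sets $\ell<\ell_m$, $\ell_m\le\ell\le L_m$, $\ell>L_m$ partition $\{1,\dots,M\}$ — irrespective of whether $\ell_m$ hits the lower cap $1$ (then the first set is empty) or exceeds $L_m$ (then the middle set is empty) — summing over $j$ gives $|f(y)-\tilde f(y)|\le\varepsilon\sum_{j=1}^N|\hat f_j|=\varepsilon\|\mathbf{\hat f}\|_1$.

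Taking the maximum over the nodes $y_1,\dots,y_N$ then yields $\|\mathbf f-\mathbf{\tilde f}\|_\infty\le\varepsilon\|\mathbf{\hat f}\|_1$. There is no genuine obstacle in this argument: all the analytic content sits in the parameter choices $M$, $q$, $\ell_m$, $L_m$ and in Lemma \ref{lem:approxE}. The only point requiring care is the bookkeeping — checking that every frequency band falls into exactly one of the four cases, and in particular that the exceptional near band $\Omega_M$ (which is not of the form $(\xi_1/2^\ell,\xi_1/2^{\ell-1}]$) is correctly covered by case i)/iv).
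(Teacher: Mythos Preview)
Your proposal is correct and follows essentially the same approach as the paper: split the frequency range into the dyadic bands $\Omega_\ell$, treat $y\in Y_M$ via Lemma~\ref{lem:approxE}\,i), and for $y\in Y_m$ with $1\le m<M$ bound the three regimes $\ell<\ell_m$, $\ell_m\le\ell\le L_m$, $\ell>L_m$ by parts ii), iii), iv) of that lemma. Your bookkeeping is in fact slightly more explicit than the paper's (which does not separately discuss the near band $\Omega_M$ or the possibility of empty ranges), but the argument is the same.
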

\begin{proof}
 We start with the error estimate. For $y\in Y_M$, Lemma \ref{lem:approxE}(i) implies
 \begin{equation*}
  \left|f(y) - \tilde f(y)\right| \le \sum_{k=1}^{N} |\hat f_k| |\e^{-\xi_k y}-1| \le \varepsilon \sum_{k=1}^{N} |\hat f_k|.
 \end{equation*}
 Now let $m=1,\hdots,M-1$, $y \in Y_m$, and partition the function $f$ in three parts
 \begin{equation*}
  f(y) =  \left(\sum_{\ell < \ell_m}
        + \sum_{\ell=\ell_m}^{L_m}
        + \sum_{\ell > L_m}\right)\sum_{\xi_j \in \Omega_\ell} \hat f_j \kappa(y,\xi_j).
 \end{equation*}
 The desired result follows by the application of Lemma \ref{lem:approxE}ii)-iv) and the approximation of the
 kernel $\kappa$ by zero, by interpolation, or by one, respectively. \qquad\end{proof}

\end{appendix} 

\bibliographystyle{abbrv}
\bibliography{../references}

\end{document}